\documentclass[a4paper,abstracton]{scrartcl}
\pdfminorversion=5
%%%%%%%%%%%%%%%%%%%%%%%%%%%%%%%%%%%%%%%%%%%%%%%%%%%%%%%%%%%%%%%%%%%%%%%%%%%%%%%%%%%%%%%%%%%%%%%%%%%%%%%%%%%%%%%%%%%%%%%%%%%%%%%%%%%%%%%%%%%%%%%%%%%%%%%%%%%%%%%%%%%%%%%%%%%%%%%%%%%%%%%%%%%%%%%%%%%%%%%%%%%%%%%%%%%%%%%%%%%%%%%%%%%%%%%%%%%%%%%%%%%%%%%%%%%%
\usepackage{amsfonts}
\usepackage{amssymb}
\usepackage{amsmath}
\usepackage{amsthm}
\usepackage{comment}

\usepackage[ruled,vlined,linesnumbered]{algorithm2e}
\usepackage{hyperref}
\usepackage[noabbrev,capitalize]{cleveref}

\usepackage{graphicx}
\usepackage{color}
\usepackage{csquotes}
\usepackage{a4wide}

%\usepackage{setspace}

%\newtheorem{conclusion}[theorem]{Conclusion}
%\newtheorem{condition}[theorem]{Condition}
% \numberwithin{equation}{section} 
% \numberwithin{table}{section}

\usepackage{tikz}
\usetikzlibrary{calc}
\usetikzlibrary{patterns,decorations.pathreplacing}
%\usetikzlibrary{arrows}
%\usetikzlibrary{decorations.markings}
\usetikzlibrary{arrows.meta}
%\usetikzlibrary{shapes.misc}

\newtheorem{theorem}{Theorem}

\newtheorem{lemma}[theorem]{Lemma}

\newtheorem{cor}[theorem]{Corollary}

% \numberwithin{observation}{section}

\usepackage{authblk}

\allowdisplaybreaks

\begin{document}

%%%%%%%%%%%%%%%%%%%%%%%%%%%%%%%%%%%%%%%%%%%%%%%%%%%%%%%%%%%%%%%%%
%%%%%%  Macros
%%%%%%%%%%%%%%%%%%%%%%%%%%%%%%%%%%%%%%%%%%%%%%%%%%%%%%%%%%%%%%%%%
\newcommand{\X}{{\mathcal{X}}}
\newcommand{\cU}{{\mathcal{U}}}
\newcommand{\cI}{{\mathcal{I}}}
\newcommand{\cC}{{\mathcal{C}}}
\newcommand{\cB}{{\mathcal{B}}}
\newcommand{\rev}[1]{\textcolor{blue}{#1}} %text segments changed in the revision
\newcommand{\R}{\mathbb{R}}
\newcommand{\N}{\mathbb{N}}

\newcommand{\radj}{R-Adj-SAT}

\newcommand{\unc}{\mathcal{Z}}

\newcommand{\set}[1]{\{ #1 \}}
\newcommand{\fromto}[2]{\set{#1,\dots,#2}}
%%%%%%%%%%%%%%%%%%%%%%%%%%%%%%%%%%%%%%%%%%%%%%%%%%%%%%%%%%%%%%%%%
% \title{Two-Stage Robust Optimization Problems with Post-Decision Uncertainty}
\title{On the Complexity of Robust Multi-Stage Problems in the Polynomial Hierarchy}

\author[1]{Marc Goerigk\footnote{Corresponding author. Email: marc.goerigk@uni-siegen.de}}
\author[2,3]{Stefan Lendl}
\author[2]{Lasse Wulf}

\affil[1]{Network and Data Science Management, University of Siegen,\authorcr Unteres Schlo{\ss} 3, 57072 Siegen, Germany}	
\affil[2]{Institute of Discrete Mathematics, Graz University of Technology,\authorcr Steyrergasse 30/II, 8010 Graz, Austria}
\affil[3]{Institute of Operations and Information Systems, University of Graz,\authorcr Universit\"atsstra{\ss}e 15, 8010 Graz, Austria}

\date{}

\maketitle

\begin{abstract}
We study the computational complexity of multi-stage robust optimization problems.
Such problems are formulated with alternating min/max quantifiers and therefore naturally fall into a higher stage of the polynomial hierarchy. Despite this, almost no hardness results with respect to the polynomial hierarchy are known. 

In this work, we examine the hardness of robust two-stage adjustable and robust recoverable optimization with budgeted uncertainty sets. Our main technical contribution is the introduction of a technique tailored to  prove $\Sigma^p_3$-hardness of such problems. We highlight a difference between continuous and discrete budgeted uncertainty: In the discrete case, indeed a wide range of problems becomes complete for the third stage of the polynomial hierarchy; in particular, this applies to the TSP, independent set, and vertex cover problems. However, in the continuous case this does not happen and problems remain in the first stage of the hierarchy. Finally, if we allow the uncertainty to not only affect the objective, but also multiple constraints, then this distinction disappears and even in the continuous case we encounter hardness for the third stage of the hierarchy. This shows that even robust problems which are already NP-complete can still exhibit a significant computational difference between column-wise and row-wise uncertainty.
\end{abstract}

\noindent\textbf{Keywords:} robust optimization; multi-stage optimization; robust adjustable optimization; robust recoverable optimization; polynomial hierarchy; combinatorial optimization

\noindent\textbf{Acknowledgements:} Supported by the Deutsche Forschungsgemeinschaft (DFG) through grant GO 2069/1-1.

%%%%%%%%%%%%%%%%%%%%%%%%%%%%%%%%%%%%%%%%%%%%%%%%%%%%%%%%%%%%%%%%%%%%%%%%%
%%%%%%%%%%%%%%%%%%%%%%%%%%%%%%%%%%%%%%%%%%%%%%%%%%%%%%%%%%%%%%%%%%%%%%%%%

\section{Introduction}

\subsection{Motivation}

As single-stage robust optimization problems have been well studied, multi-stage problems have seen increasing attention in the robust optimization community. These are problems where the decision making process is split into two or even more stages. Examples for problems of this kind are robust two-stage adjustable optimization and robust recoverable optimization.

In this paper, we study this complexity of multi-stage robust optimization problems. The case of one-stage robust optimization has been thoroughly studied (see, e.g., \cite{kasperski2016robust}). In contrast, considerably less is known about the two-stage or recoverable robust case. In some cases, there is no theoretical foundation that excludes the possibility of a compact mixed-integer programming formulation (i.e., it is not clear if the problem is in NP or not). As many one-stage robust optimization problems are already NP-hard, it seems likely that a higher level of the complexity hierarchy \cite{stockmeyer1976polynomial} needs to be studied to capture the complexity of two-stage problems. Indeed, a compelling case has been made in \cite{woeginger2021trouble} that multi-stage complexity questions should be approached from this perspective. 

One very common claim in the literature is that even if the uncertainty 
is only present in the objective function (row-wise uncertainty), one can equivalently reduce this 
to an instance with uncertainty only in the constraints.
Hence, only algorithmic techniques for the more-general case of constraint-wise uncertainty are developed.
While this problem reduction is correct, we show that the 
more general case of uncertainty in the constraints leads to a jump of the problem complexity from 
NP-hardness to $\Sigma^p_3$-hardness. This implies that for the simpler case of row-wise uncertainty
one should be able to obtain more efficient algorithms than for the more general case of column-wise uncertainty.
Hence, the common reasoning to only develop algorithms for the more general case of uncertainty in the constraints is 
flawed and the study of specialized algorithms for the case of row-wise uncertainty is vital.

\subsection{Background}

Optimization problems in practice often contain parameters that cannot be known precisely. If we simply ignore this uncertainty and use estimated parameter values, we consider so-called nominal optimization problems. While such an approach may result in relatively small and easy to handle optimization problems, it may also result in high costs or infeasibility if parameters happen to deviate from the estimated value. For this reason, several approaches have been developed to include uncertainty already in the solution process; these include stochastic \cite{powell2019unified}, fuzzy \cite{lodwick2010fuzzy} and robust optimization \cite{ben2009robust}.

Robust optimization typically assumes that a set of possible scenarios can be constructed, but does not require a probability distribution over this set. In (classic, one-stage) robust optimization, we would like to find a solution that is feasible under every scenario and gives the best objective value with respect to the worst case scenario in the uncertainty \cite{ben2002robust}. Note that this description implies that uncertainty may be present in the constraints and in the objective function. As it is more convenient to avoid this distinction and to consider a unified setting, often the uncertain objective function is reformulated as an uncertain constraint in an epigraph formulation, which means that without loss of generality, we may consider the objective without uncertainty (see, for example, \cite{ben2002robust}).

This robust optimization approach of finding a solution that is feasible for all scenarios may be too conservative if the problem under consideration allows for more dynamic decision making. In two-stage (adjustable) robust optimization \cite{ben2004adjustable}, we distinguish between two types of variables. The values of here-and-now (first-stage) variables need to be decided beforehand. We then receive the information which scenario from the uncertainty set has been realized, before we decide on the value of wait-and-see (second-stage) variables. This means that the decision maker has a higher degree of flexibility and thus can find solutions with better objective value. This separation into first- and second-stage variables may reflect decisions that are made in different planning stages; e.g., the first-stage solution may reflect a long-term investment, while the second-stage variables may define how to operate it. A recent survey on adjustable robust optimization can be found in \cite{yanikouglu2019survey}.

A variant of this approach is recoverable robust optimization \cite{liebchen2009concept}. Here the second-stage variables reflect modifications to the first-stage solution, which we would like to keep as small as possible. As an example, consider a train timetable, where a solution needs to be communicated to travelers in advance, but still needs be adjusted to incorporate current delays during operation.

To formulate any robust optimization model, the uncertainty set is a central component. On the one hand, it needs to be flexible enough to reflect all possible scenarios; on the other hand, it should have a simple structure to improve the tractability of the resulting robust model. This has lead to a wide range of research into the formulation of uncertainty sets, see, e.g. \cite{bertsimas2009constructing}. Particularly successful in this trade-off have been budgeted uncertainty sets as originally introduced in \cite{bertsimas2003robust,bertsimas2004price}. The simple idea to construct such sets is to assume that all parameters are at their nominal values by default, but at most $\Gamma$ many values may deviate simultaneously within given intervals.

A high-level distinction can be made between discrete and continuous uncertainty sets. This also applies to budgeted uncertainty sets, where we either assume that a parameter deviates to an extreme value or not (discrete case); or where we may assume that more than $\Gamma$ parameters can deviate partially towards their extreme values (continuous case). In the case of one-stage optimization, it is well-known that the convex hull of an uncertainty set leads to the same robust optimization problem as when using the original uncertainty set \cite{yanikouglu2019survey}. This is different for two-stage optimization, where the possibility to react to a scenario means that there is difference between a discrete uncertainty set and its convex hull (see, e.g., \cite{chassein2018recoverable}).

A widely used method to solve two-stage robust optimization problems is to formulate a problem with a discrete subset of scenarios, and then to alternate between solving such reduced problems with solving the worst-case problem to find another scenario that is added to the current reduced problem \cite{aissi2009min,zeng2013solving}. 
A great advantage of this approach it can be easily applied to any two-stage robust problem. For some problems, it is possible to reformulate the two-stage problem using a compact mixed-integer programming formulation (i.e., the problem remains in NP). Often, such compact formulations outperform iterative approaches. Therefore, the choice of solution method is closely connected to the complexity class of the problem.

Even one-stage problems have been shown to be in higher complexity classes. The min-max regret knapsack problem with interval uncertainty was shown to be $\Sigma^p_2$-complete \cite{deineko2010pinpointing}, which also leads to $\Sigma^p_2$-completeness for the min-max regret weighted set covering problem \cite{coco2022robust}. Also the robust linear binary programming problem with binary uncertainty sets has been shown to be in the same complexity class \cite{claus2020note}.

Examples where the complexity of multi-stage robust problems has been studied include
shortest path \cite{busing2012recoverable}, spanning tree \cite{kasperski2011approximability}, or selection problems \cite{kasperski2017robust,goerigk2022recoverable}. In these cases, the analysis has focussed on NP-hardness. Only few results are available on higher levels of complexity. Indeed, a simple argument shows that if the uncertainty set is convex and only affects the objective function, the recoverable robust problem remains in NP under some mild assumptions (see \cite{hanasusanto2015k,buchheim2017min,bold2020}). In \cite{pfetsch2021generic}, a linear resilient design decision problem is presented, which is proved to be $\Sigma^p_3$-hard. In parallel to our own work, the complexity of recoverable robust problems with Hamming distance has been studied with similar results \cite{grune2022complexity}, albeit with what is called xor-dependency scenarios.

Related protection-interdiction problems \cite{nabli2022complexity} and bilevel problems \cite{caprara2016bilevel} are also known to be on higher levels of the polynomial hierarchy. Another way to consider general robust multi-stage is through quantified programming \cite{goerigk2021multistage}. It is known that the quantified program of $k$ stages is $\Sigma^p_{2k-1}$-hard, see \cite{chistikov2017complexity,nguyen2020computational}. With the availability of general multi-stage solvers \cite{phdhartisch}, more tools are at disposal to treat multi-stage robust problems.

\subsection{Contributions}

In its general form, the two-stage (adjustable) robust optimization is given by
\[ \min_{\pmb{x},\pmb{y}(\cdot)}  \left\{ \max_{\pmb{\zeta} \in \unc} \pmb{C}(\zeta)\pmb{x} + \pmb{c}(\zeta)\pmb{y}(\pmb{\zeta})  \colon \forall \pmb{\zeta} \in \unc \colon A(\pmb{\zeta}) \pmb{x} + B(\pmb{\zeta})\pmb{y}(\pmb{\zeta}) \leq \pmb{d}(\pmb{\zeta})  \right\} \]
where $\mathcal{Z}$ denotes the primitive uncertainty set that can influence coefficient matrices $A$ and $B$, the right-hand side $\pmb{d}$ and cost coefficients $\pmb{C}$ and $\pmb{c}$. While $\pmb{x}$-variables need to be fixed in advance, we can let $\pmb{y}$-variables depend on $\pmb{\zeta}$; equivalently, we can consider them as a function in $\pmb{\zeta}$. If there is no uncertainty in the constraints, we will also write $\X'$ for the set of feasible first-stage solutions, and $\X(\pmb{x})$ for the set of feasible second-stage solutions depending on first-stage solution $\pmb{x}$. In case of combinatorial optimization over a set $\X$, we often have that $\X'=\{0,1\}^n$ and $\X(\pmb{x}) = \{ \pmb{y}\in\{0,1\}^n : \pmb{x}+\pmb{y} \in\X\}$.

In the classic model introduced by Ben-Tal et al.~\cite{ben2004adjustable} both $\pmb{x} \in \mathbb{R}^{n_1}, \pmb{y} \in \mathbb{R}^{n_2}$ are continuous variables.
In this work we consider the computationally harder variant with (mixed) integer recourse, where there is the additional integrality constraint on some of the recourse decisions, hence $\pmb{y} = (\pmb{y}^c, \pmb{y}^{d}) \in (\mathbb{R}^{n_c}, \mathbb{Z}^{n_d})$.

There are different primitive uncertainty sets $\mathcal{Z} \subseteq \mathbb{R}^{\ell}$.
We focus on the discrete and continuous budgeted primitive uncertainty sets
\[ \mathcal{Z}^c =  \{ \pmb{\zeta} \in [0,1]^{\ell} \colon \| \pmb{\zeta} \|_1 \leq \Gamma \} \]
\[ \mathcal{Z}^d =  \{ \pmb{\zeta} \in \{0,1\}^{\ell} \colon \| \pmb{\zeta} \|_1 \leq \Gamma \} \]
with affine linear cost and right hand side functions $\pmb{c}(\cdot)$ and $\pmb{d}(\cdot)$. Such sets are particularly of interest for combinatorial optimization problems as the simplest, non-trivial shape of uncertainty. The image of parameters under the primitive uncertainty set is also called the uncertainty set and denoted by $\cU$. In the case of continuous budgeted uncertainty only in the second-stage objective coefficients $\pmb{c}$, we can write $\cU^c_\Gamma = \{\pmb{c} : \exists \zeta \in \mathcal{Z}^c \text{ s.t. } c_i = \underline{c}_i + (\overline{c}_i - \underline{c}_i)\zeta_i \text{ for all } i\}$ to denote the uncertainty set, and treat the case of discrete budgeted uncertainty set $\cU^d_\Gamma$ analogously.

The related approach of recoverable robustness \cite{liebchen2009concept} can be framed as a special case of this adjustable approach, which differs in philosophy. In the most frequent formulation of this problem, we assume that a full solution is constructed in the first stage, while the second stage allows for modifications to the first-stage solution where we bound by how much the solution is allowed to be changed.

In this paper we make the following contributions.
\begin{itemize}
\item Our main technical contribution is the introduction of a two-stage robust satisfiability problem 
and the proof of its $\Sigma^p_3$-completeness. 
This problem is tailored to allow reductions to two-stage robust optimization problems with budgeted uncertainty.
We use this technique for all of the following hardness results.
We also show that the $K$-stage problem is $\Sigma_{2K-1}^p$-complete for fixed $K$ (see Section~\ref{sec:radj}).

\item Using this result, we show that the two-stage robust optimization problem with (mixed) binary recourse and continuous budgeted uncertainty set affecting constraints is $\Sigma_3^p$-hard (see Section~\ref{sec:contbudgeted}). As any nominal problem that is in NP remains in NP when a continuous uncertainty set affects the objective function, this highlights a difference when we consider column-wise uncertainty in contrast to row-wise uncertainty.

\item Turning to discrete budgeted uncertainty sets, we show that the two-stage and recoverable robust independent set problems are $\Sigma_3^p$-hard (see Sections~\ref{subsec:two-stage-ind-set} and \ref{subsec:rec-ind-set}).

\item We further show that two-stage and recoverable robust independent set, traveling salesman, and vertex cover problems with discrete uncertainty sets are $\Sigma_3^p$-hard (see Sections~\ref{subsec:two-stage-tsp}, \ref{subsec:rec-tsp}, and \ref{subsec:vertexcover}).

\item Finally, we show that for fixed $K$, the $K$-stage independent set problem with discrete uncertainty is $\Sigma_{2K-1}^p$-hard. If $K$ is part of the input, then the problem becomes PSPACE-hard (see Section~\ref{subsec:multistage}).
\end{itemize}

We close with a summary of results and further research questions in Section~\ref{sec:conclusions}. A brief overview to the $\Sigma^p_3$-hardness results from this paper is given in Table~\ref{tab:overview}. Columns ''cont. b.u.'' and ''disc. b.u.'' correspond to continuous and discrete budgeted uncertainty, respectively. The case of right-hand side budgeted uncertainty in combinatorial problems is omitted, as the setting is not well-defined.

\begin{figure}[htbp]
\begin{center}
\begin{tabular}{r|ccc}
 & cont. b.u. & cont. b.u. & disc. b.u. \\
problem & RHS & objective & objective \\ 
\hline
MIP & $\Sigma^p_3$ & NP & $\Sigma^p_3$ \\
IS & - & NP & $\Sigma^p_3$ \\
TSP & - & NP & $\Sigma^p_3$ \\
VC & - & NP & $\Sigma^p_3$
\end{tabular}
\end{center}
\caption{$\Sigma^p_3$ complexity results for adjustable and recoverable problems in this paper.}\label{tab:overview}
\end{figure}

\section{Robust Adjustable SAT}
\label{sec:radj}

We present a variant of the satisfiability problem (SAT), which we call the \emph{robust adjustable SAT problem with budgeted uncertainty} ({\radj} for short) that is inspired by Lemma~2.3 in \cite{pfetsch2021generic}.
We show that this problem is $\Sigma_3^p$-hard. 
The problem is tailored to be similar to many problems in the setting of robust optimization with discrete budgeted uncertainty, specifically the setting of robust recoverable and robust two-stage optimization. 
In fact, every hardness proof in the remaining paper is based on {\radj}. 
We believe that also many other problems in the same setting admit a simple reduction from {\radj}.

We recall the following terms: 
A \emph{boolean variable} $x$ is a variable which takes either the value '0' or '1'. 
A \emph{literal} corresponding to $x$ is either the \emph{positive literal} $x$ or the \emph{negative literal} $\overline{x}$. A \emph{clause} is a disjunction of literals.
 A formula is in \emph{conjunctive normal form} if it is a conjunction of clauses. 
 An \emph{assignment} is a map $f : X \rightarrow \set{0,1}$, where $X$ is the set of all variables. 
 We are now ready to present the problem {\radj}.

\begin{quote}
Problem {\radj}
\\
\textbf{Instance:}  A SAT-formula $\varphi(x,y,z)$ given in conjunctive normal form. A partition of the set of variables into three disjoint parts $X \cup Y \cup Z$. An integer $\Gamma \geq 0$.  
\\
\textbf{Question:} Is there an assignment of the variables in $X$ such that for all subsets $Y' \subseteq Y$ of size $|Y'| \leq \Gamma$, if we set all variables in $Y'$ to '0', then there exists an assignment of the remaining variables $(Y \setminus Y') \cup Z$ such that $\varphi(x,y,z)$ is satisfied?
\end{quote}

The problem {\radj} can also be understood as a game between two players. 
First, player number one fixes the assignment of variables in $X$. 
Secondly, player number two chooses a subset $Y' \subseteq Y$ of size at most $\Gamma$. 
These variables are set to '0'. 
After that, player one chooses all remaining variables. 
The goal for player one is to satisfy the formula $\varphi(x,y,z)$ while player two has the opposite goal. 
The given instance of {\radj} is a Yes-instance if and only if player one has a winning strategy. 
We now wish to show that {\radj} is $\Sigma_3^p$-complete. 
In order to do that, we reduce from the canonical $\Sigma_3^p$-complete problem $\exists\forall\exists$-SAT \cite{stockmeyer1976polynomial}.
\begin{quote}
Problem $\exists\forall\exists$-SAT
\\
\textbf{Instance:}  A SAT-formula $\psi(a,b,c)$ given in conjunctive normal form. A partition of the set of variables into three disjoint parts $A \cup B \cup C$.
\\
\textbf{Question:} Is there an assignment of the variables in $A$ such that for all assignments of variables in $B$, there exists an assignment of the variables in $C$ such that $\psi(a,b,c)$ is satisfied?
\end{quote}

\begin{theorem}
\label{thm:gamma-SAT}
Problem {\radj} is $\Sigma_3^p$-complete, even if $|X|=|Y|=|Z|$ and all clauses in $\varphi$ contain exactly three literals.
\end{theorem}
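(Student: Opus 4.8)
The plan is to prove membership in $\Sigma_3^p$ and $\Sigma_3^p$-hardness separately, with the bulk of the work going into the hardness reduction from $\exists\forall\exists$-SAT.

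Membership in $\Sigma_3^p$ is immediate from the quantifier structure: the problem asks $\exists$ (assignment of $X$) $\forall$ ($Y' \subseteq Y$ with $|Y'| \le \Gamma$) $\exists$ (assignment of $(Y\setminus Y') \cup Z$) such that a polynomial-time-checkable predicate holds. The middle universal quantifier ranges over polynomially-sized objects ($Y'$ is just a subset of the variables), so this is a genuine $\Sigma_3^p$ formula and {\radj} $\in \Sigma_3^p$.

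For hardness, I would start from an instance $\psi(a,b,c)$ of $\exists\forall\exists$-SAT with variable partition $A \cup B \cup C$ and build a {\radj} instance $\varphi(x,y,z)$. The first-stage block $X$ will encode the $A$-variables directly. The key obstacle is the second quantifier: {\radj}'s adversary does not get to choose an \emph{arbitrary} assignment of a block $B$ — instead it may only select a bounded-size subset $Y'$ and force those variables to $0$. To simulate a free choice of a boolean value $b_i \in \{0,1\}$ from this weaker "zero-forcing" power, the standard trick is to introduce, for each $b_i$, a pair of $Y$-variables $y_i, y_i'$ (intended to represent $b_i$ and $\overline{b_i}$) together with clauses asserting $y_i \vee y_i'$, i.e.\ "at least one of the pair is true". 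Setting $\Gamma = |B|$ lets the adversary zero out exactly one variable from each pair; the clause $y_i \vee y_i'$ then forces the other to be $1$ in the third stage, so the surviving pattern encodes an arbitrary assignment of $b_i$. One has to argue both directions: (i) the adversary can realize every $B$-assignment by an admissible choice of $Y'$ (pick one variable per pair), and (ii) any admissible $Y'$ (which could zero out fewer than one per pair, or two from one pair and none from another) still leaves the $\exists$-player able to win whenever the original $\exists\forall\exists$ instance is a Yes-instance — this is where the clause $y_i \vee y_i'$ and the monotonicity of "having more true variables is never worse for the satisfier" do the work. The third-stage block $Z$ then contains the $C$-variables (plus the residual freedom in the $y$'s), and $\varphi$ is $\psi$ with each literal on $b_i$ replaced by $y_i$ and each literal on $\overline{b_i}$ replaced by $y_i'$, conjoined with the pair-clauses.

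Finally I would address the "even if" refinements. To force $|X| = |Y| = |Z|$ and exactly three literals per clause, I would apply padding and the usual $3$-SAT normalization: add dummy variables to the smaller blocks that appear in no meaningful clause (or only in trivially satisfiable clauses), and split long clauses / pad short clauses using fresh auxiliary variables placed in $Z$ in the standard way, being careful that the auxiliary variables do not interfere with the game semantics (they should be existentially controlled in the last stage and their introducing clauses must remain satisfiable regardless of the earlier choices). The main thing to verify here is that padding $Z$ to match $|X|$ and $|Y|$ does not give the $\exists$-player spurious power and that the clause-normalization preserves the Yes/No answer — both are routine once the core reduction is in place. The conceptual heart of the argument, and the step I expect to require the most care, is the pair-gadget correctness: showing that the adversary's restricted zero-forcing move is \emph{exactly} as powerful as a free $B$-assignment, in both directions.
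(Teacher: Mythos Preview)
Your overall plan matches the paper's: reduce from $\exists\forall\exists$-SAT, copy $A$ into $X$, encode each $b_i$ by a pair of $Y$-variables, set $\Gamma=|B|$, and replace the literal $b_i$ by the positive literal $y_i$ and $\overline{b_i}$ by the positive literal $y_i'$. Membership and the padding/3-CNF normalisation are also handled the same way.

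However, the heart of your reduction has a genuine gap, precisely at the point you flag as ``requiring the most care''. You add the clause $y_i \lor y_i'$ for each pair and then argue that if the adversary spends two of its $\Gamma$ zeros on the same pair (and hence none on some other pair), the $\exists$-player still wins by monotonicity. This is false: once both $y_i$ and $y_i'$ are forced to $0$, the clause $y_i \lor y_i'$ itself is violated, so $\varphi$ is unsatisfiable regardless of the remaining freedom. Thus your construction maps every instance --- including Yes-instances of $\exists\forall\exists$-SAT --- to a No-instance of {\radj}. (Dropping the pair clause does not help either: with $y_i=y_i'=0$ neither $b_i$ nor $\overline{b_i}$ can be used to satisfy any clause, and simple examples such as $(b_1\lor c_1)\wedge(\overline{b_1}\lor\overline{c_1})$ already break the forward direction.)

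The paper repairs exactly this point with an escape-hatch mechanism. It introduces a single variable $s$ (and auxiliaries $s_1,\dots,s_n$) in $Z$, appends $\lor\, s$ to every translated clause of $\psi$, and adds the clauses $(\overline{s_i}\lor y_i^t)$, $(\overline{s_i}\lor y_i^f)$ for each $i$ together with $(\overline{s}\lor s_1\lor\dots\lor s_n)$. The effect is that $s$ may be set to $1$ only when some pair has \emph{both} members un-attacked; and whenever the adversary wastes budget by zeroing two variables of one pair, such a free pair necessarily exists, so $s=1$ trivially satisfies all main clauses. Conversely, when the adversary plays ``honestly'' (one zero per pair), $s$ is forced to $0$ and the translated clauses behave exactly like $\psi$. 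This escape hatch is the missing idea in your proposal; once you add it, the rest of your argument goes through as you outlined.
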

\begin{proof}
We first prove the theorem without the additional assumptions that $|X|=|Y|=|Z|$ and that all clauses in $\varphi$ contain three literals. We explain at the end of the proof why these assumptions can be added.

 It is clear that {\radj} is contained in the class $\Sigma_3^p$.
So it remains to prove $\Sigma_3^p$-hardness. 
Assume we are given an instance $(\psi,A,B,C)$ of $\exists\forall\exists$-SAT, we construct in polynomial time an instance $(\varphi,X,Y,Z,\Gamma)$ of {\radj} such that $(\varphi,X,Y,Z,\Gamma)$ is a Yes-instance if and only  $(\psi,A,B,C)$ is a yes-instance. 
It is well-known \cite{schaefer2002completeness} that $\exists\forall\exists$-SAT is $\Sigma_3^p$-complete even if $|A| = |B| = |C|$ , so we can without loss of generality denote the variables in $A,B,C$ by $A = \fromto{a_1}{a_n}$, $B = \fromto{b_1}{b_n}$ and $C = \fromto{c_1}{c_n}$. 
We now define the sets $X,Y,Z$ of variables the following way:
\begin{align*}
X &= \fromto{x_1}{x_n}\\
Y &= \fromto{y^t_1}{y^t_n} \cup \fromto{y^f_1}{y^f_n}\\
Z &= \fromto{z_1}{z_n} \cup \fromto{s_1}{s_n} \cup \set{s}.
\end{align*}
This means that $X$ contains $n$ variables, $Y$ contains $2n$ variables, and $Z$ contains $2n+1$ variables. 
Furthermore, let $C_1,\dots, C_\ell$ be the clauses of $\psi$.
Note that these clauses use variables from $A \cup B \cup C$. 
We want to replace these clauses with new clauses which use variables from $X \cup Y \cup Z$. 
We define the \emph{replacement} of a literal in $A$ by $r(a_i) := x_i$ and $r(\overline a_i) := \overline x_i$ for $i=1,\dots,n$. 
Likewise, we define $r(c_i) := z_i$ and $r(\overline c_i) = \overline z_i$. 
In $B$, we define the slightly different replacement $r(b_i) := y^t_i$ and $r(\overline b_i) := y^f_i$. 
The formula $\varphi$ contains the following three types of clauses: 
\begin{itemize}
\item For every clause $C_i = w_1 \lor \dots \lor w_t$ contained in $\psi$, where $w_1 , \dots, w_t$ are its literals, we add the following clause $r(C_i)$ of length $t+1$ to $\varphi$:
\begin{equation}
r(C_i) := r(w_1) \lor \dots r(w_t) \lor s. \label{clauses1}
\end{equation}
\item For every $i=1,\dots,n$, the formula $\varphi$ contains the two clauses
\begin{equation}
\overline s_i \lor y^t_i \text{ and } \overline s_i \lor y^f_i. \label{clauses2}
\end{equation}
\item The formula $\varphi$ contains the single clause 
\begin{equation}
\overline s \lor s_1 \lor \dots \lor s_n. \label{clauses3}
\end{equation}
\end{itemize}
Finally, we let $\Gamma :=  n$. This completes our description of the new instance. We claim that the new instance $(\varphi,X,Y,Z, \Gamma)$ is a Yes-instance if and only if $(\psi,A,B,C)$ is a Yes-instance.

Assume that $(\psi,A,B,C)$ is a Yes-instance. This means there is an assignment $f_A : A \rightarrow \set{0,1}$, such that for all assignments $f_B : B \rightarrow \set{0,1}$ there is an assignment $f_C: C \rightarrow \set{0,1}$ such that $\psi(a,b,c)$ is satisfied. 
We now show how to satisfy $\varphi(x,y,z)$. 
First, let $g_X : X \rightarrow \set{0,1}$ be the assignment which corresponds to $f_A$, that is, variable $x_i=1$ if and only if variable $a_i=1$. 
Next, let $Y' \subseteq Y$ be an arbitrary subset of $Y$ of size $|Y'| \leq \Gamma = n$. 
We have to show that the assignment $g_X$ can be completed to a satisfying assignment such that all variables in $Y'$ are assigned '0'.  We distinguish three cases:

\textbf{Case 1:} There is an $i \in \fromto{1}{n}$ such that both $y_i^t,y_i^f \in Y'$: Then because $|Y'| \leq n$, there is another $j \neq i$ such that neither of $y^t_j, y^f_j$ is contained in $Y'$. 
We now show how to complete the partial assignment $g_x$ to a satisfying assignment: We set all the variables in $Y'$ to 0, and furthermore $y^t_j=y^f_j=1$ and $s_j=1$ and $s_k=0$ for $k \neq j$ and $s=1$. 
All remaining variables in $Y \cup Z$ are set arbitrarily. Note that then all the clauses \eqref{clauses1}, \eqref{clauses2} and \eqref{clauses3} are satisfied and all variables in $Y'$ are set to 0, as requested.

\textbf{Case 2:} We have $|Y'| < n$. Then again there is an index $j$ such that neither of $y^t_j, y^f_j$ is contained in $Y'$. This case is analogous to case 1.

\textbf{Case 3:} We have $|Y'| = n$ and for every $i=1,\dots,n$, exactly one of $y_i^t$, $y^f_i$ is contained in $Y'$. 
We then consider the assignment $f_B : B \rightarrow \set{0,1}$, where for all $i=1,\dots,n$ we have $f_b(b_i) = 0$ if $y_i^t \in Y'$ and $f_b(b_i) = 1$ if $y_i^f \in Y'$. 
In other words, under this assignment we have that $b_i = 1$ if and only if $y^t_i$ is not forced to 0, i.e.\ if $y^t_i \not\in Y'$. 
By assumption, there exists an assignment $f_C : C \rightarrow \set{0,1}$, such that under assignments $f_A, f_B, f_C$ the formula $\psi$ is satisfied. 
We define the assignment $g_Y : Y \rightarrow \set{0,1}$ such that $g_Y(y) = 0$ if $y \in Y'$ and $g_Y(y) = 1$ if $y\not\in Y'$ for all $y \in Y$. 
We furthermore define the assignment $g_Z : Z \rightarrow \set{0,1}$ such that $g_Z(z_i) = 1$ if and only if $f_C(c_i) = 1$. 
We also let $g_Y(s_1) = \dots = g_Y(s_n) = g_Y(s) = 0$. 
It follows from the definition of the function $r(\cdot)$ and the properties of the assignment $f_A,f_B,f_C$ that all clauses \eqref{clauses1}, \eqref{clauses2} and \eqref{clauses3} are satisfied. This was to show.

For the reverse direction, we have to show that if $(\varphi,X,Y,Z,\Gamma)$ is a Yes-instance, then $(\psi,A,B,C)$ is a Yes-instance. 
Assume that $(\varphi,X,Y,Z,\Gamma)$ is a Yes-instance, then there is an assignment $g_X : X \rightarrow \set{0,1}$ such that for all $Y' \subseteq Y$ of size $|Y'| \leq \Gamma = n$ the assignment can be completed to a satisfying assignment such that all variables in $Y'$ are '0'.
Let $f_A : A \rightarrow \set{0,1}$ be the assignment that corresponds to $g_X$, that is, $f_A(a_i) = 1$ if and only if $g_X(x_i) = i$ for all $i=1,\dots,n$.
 Let $f_B : B \rightarrow \set{0,1}$ be an arbitrary assignment. 
 We have to show that there is an assignment $f_C : C \rightarrow \set{0,1}$ such that $\psi(a,b,c)$ is satisfied. 
 In order to do this, define the set $Y' := \set{y_i^t : f_B(x_i) = 0} \cup \set{y_i^f : f_B(x_i) = 1}$. 
 Note that $|Y'| = \Gamma$. By the properties of the Yes-instance of {\radj}, there exist assignments $g_Y,g_Z$ of variables in $Y,Z$ such that all variables in $Y'$ are 0 and such that $\varphi$ is satisfied under $(g_X,g_Y,g_Z)$. 
 In particular, the clauses \eqref{clauses2} are satisfied. For every $i=1,\dots,n$, observe that $Y'$ contains either $y_i^f$ or $y_i^t$, and therefore we have $s_i=0$ in assignment $g_Z$. 
 It follows from \eqref{clauses3} that $s=0$. 
 We now define an assignment $f_C : C \rightarrow \set{0,1}$ by letting $f_C(c_i) = 1$ if and only $g_Z(z_i) = 1$ for all $i=1,\dots,n$. 
 We claim that under the assignment $(f_A,f_B,f_C)$ the formula $\varphi$ is satisfied. 
 To see this, observe that clause $r(C_i)$ corresponds to clause $C_i$ and recall that $s=0$. 
 Now, if clause $r(C_i)$ is satisfied by some variable $x_j$ or $z_j$, it is clear that also $C_i$ is satisfied because the assignments $f_A$ and $g_X$ ($f_C$ and $g_Z$ respectively) correspond to each other. 
 If $r(C_i)$ is satisfied by some variable $y^t_i$, then this implies $y^t_i \not\in Y'$ and therefore $f_B(b_i)=1$ and so $C_i$ is satisfied.
  Analogously if $r(C_i)$ is satisfied by some variable $y^f_i$, then $y^t_i \not\in Y'$ and $f_B(b_i)=0$ and $C_i$ is satisfied. 
  This shows that the whole formula $\varphi$ is satisfied. 
  This completes the proof.
  
Finally, we show why one can make the additional assumption that $|X|=|Y|=|Z|$ and that all clauses in $\varphi$ contain three literals: One can make use of a standard trick which transforms a clause of arbitrary length into a set of clauses of length exactly 3 by introducing additional helper variables \cite{garey1979computers}.
We apply this trick to all clauses of $\varphi$ and add the resulting helper variables into the set $Z$. After that we can fill up the sets $X,Y,Z$ with "useless" new variables that do not appear in any clause, until we have $|X| = |Y| = |Z|$. The rest of the proof proceeds in the same manner.
\end{proof}

Finally, we present a multi-stage version of {\radj}. Let $k \geq 1$ be an integer.

\begin{quote}
Problem $k$-stage {\radj}
\\
\textbf{Instance:}  A SAT-formula $\varphi(x,y,z)$ given in conjunctive normal form. A partition of the set of variables into $2k-1$ disjoint parts $X_1 \cup \dots X_{2k-1}$. An integer $\Gamma \geq 0$.  
\\
\textbf{Question:} Is there an assignment of the variables in $X_1$ such that for all subsets $X_2' \subseteq X_2$ of size $|X_2'| \leq \Gamma$, there exists an assignment of $X_2 \cup X_3$ with all of $X'_2$ assigned '0' such that for all subsets $X_4' \subseteq X_4$ of size $|X_4'| \leq \Gamma$ there exists an assignment of $X_4 \cup X_5$ with all of $X_4'$ assigned '0', etc\dots such that $\varphi$ is satisfied?
\end{quote}

The following theorem can be proven by adapting the proof of \cref{thm:gamma-SAT}.

\begin{theorem}
\label{thm:multi-stage-gamma-sat}
If $k \geq 1$ is a constant, then $k$-stage {\radj} is $\Sigma_{2k-1}^p$-complete. If $k$ is part of the input, then the problem is PSPACE-complete. This holds even if $|X_1|=\dots=|X_{2k-1}|$ and all clauses of $\varphi$ contain exactly three literals.
\end{theorem}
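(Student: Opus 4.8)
The plan is to adapt the proof of \cref{thm:gamma-SAT}, replacing the single budgeted universal block by a chain of $k-1$ budgeted universal blocks. Membership is immediate: for constant $k$ the quantifier structure of $k$-stage {\radj} is $\exists X_1\,\forall X_2'\,\exists(X_2{\cup}X_3)\,\forall X_4'\,\exists(X_4{\cup}X_5)\cdots$, i.e.\ $2k-1$ alternating blocks beginning with $\exists$, and the innermost predicate (``$|X_{2j}'|\le\Gamma$, the variables of $X_{2j}'$ are set to $0$, and $\varphi$ is satisfied'') is checkable in polynomial time; hence the problem lies in $\Sigma_{2k-1}^p$. If $k$ is part of the input, there are only polynomially many alternations over polynomially many bits with a polynomial-time predicate, so the problem lies in alternating polynomial time, which equals PSPACE.

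For hardness we reduce from the canonical $\Sigma_{2k-1}^p$-complete problem $\exists\forall\cdots\exists$-SAT with $2k-1$ alternating quantifier blocks $A_1,\dots,A_{2k-1}$ (for the PSPACE statement we instead reduce from TQBF with the number of blocks being part of the input); as in the proof of \cref{thm:gamma-SAT} we may assume $|A_1|=\dots=|A_{2k-1}|=n$. We let $X_1=\fromto{x_1}{x_n}$ host the first existential block; for each universal index $2j$ ($j=1,\dots,k-1$) we set $X_{2j}=\fromto{y^{t,j}_1}{y^{t,j}_n}\cup\fromto{y^{f,j}_1}{y^{f,j}_n}$ with replacements $r(a^{2j}_i):=y^{t,j}_i$, $r(\overline{a^{2j}_i}):=y^{f,j}_i$; and for each existential index $2j+1$ ($j=1,\dots,k-1$) we set $X_{2j+1}=\fromto{z^{j}_1}{z^{j}_n}\cup\fromto{s^{j}_1}{s^{j}_n}\cup\set{s^j}$ with $r(a^{2j+1}_i):=z^j_i$, and $r(a^1_i):=x_i$. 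We put $\Gamma:=n$. The clauses of $\varphi$ are: (i) for every clause $C$ of $\psi$, the clause $r(C)\lor s^1\lor\dots\lor s^{k-1}$; (ii) for every $j$ and $i$, the clauses $\overline{s^j_i}\lor y^{t,j}_i$ and $\overline{s^j_i}\lor y^{f,j}_i$; (iii) for every $j$, the clause $\overline{s^j}\lor s^j_1\lor\dots\lor s^j_n$. Exactly as in \cref{thm:gamma-SAT}, a budgeted move $X_{2j}'$ is \emph{honest} if it contains precisely one of $\set{y^{t,j}_i,y^{f,j}_i}$ for every $i$; an honest move encodes an assignment of $A_{2j}$, and then clauses (ii),(iii) force $s^j_i=0$ for all $i$ and $s^j=0$, so the escape literal $s^j$ disappears from the type-(i) clauses. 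If instead the move at block $2j$ is \emph{dishonest} (wastes budget), some pair $y^{t,j}_i,y^{f,j}_i$ is untouched and the player may set $y^{t,j}_i=y^{f,j}_i=s^j_i=s^j=1$ (all other $s^j_\cdot=0$), satisfying clauses (ii),(iii) of block $j$ and making every type-(i) clause true.

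The equivalence is then proved block by block, mirroring the two directions of \cref{thm:gamma-SAT}. Given a winning strategy for the existential QBF player, the {\radj} player sets $X_1$ according to $A_1$; at stage $2j+1$ it knows $X_1$ and $X_2',\dots,X_{2j}'$, so if all moves so far were honest it reads off the encoded assignments of $A_2,\dots,A_{2j}$, applies the QBF strategy to pick $A_{2j+1}$, copies it into $z^j$, and zeroes all $s$-variables seen so far; if some earlier block was dishonest it has already locked $s^{j_0}=1$ and henceforth only needs to keep the type-(ii),(iii) clauses of the remaining blocks satisfiable, which it does by zeroing their $s$-variables. A terminal all-honest play yields a full QBF play satisfying $\psi$, hence all type-(i) clauses; a terminal play with a dishonest block satisfies them via $s^{j_0}=1$. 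Conversely, given a winning strategy for the {\radj} player, we simulate each QBF adversary move $A_{2j}$ by the honest budgeted move $X_{2j}'=\set{y^{t,j}_i:f_{A_{2j}}(a^{2j}_i)=0}\cup\set{y^{f,j}_i:f_{A_{2j}}(a^{2j}_i)=1}$ of size exactly $\Gamma=n$; honesty forces $s^j_i=s^j=0$ in the player's response, so every satisfied type-(i) clause is satisfied by a genuine literal, and reading $A_1,A_3,\dots$ off from $x,z^1,z^2,\dots$ gives an assignment satisfying $\psi$. The additional assumptions are obtained as in \cref{thm:gamma-SAT}: break long clauses into $3$-clauses via fresh helper variables placed into $X_{2k-1}$, then pad every $X_i$ with variables absent from all clauses until $|X_1|=\dots=|X_{2k-1}|$. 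Since the size of $\varphi$ grows polynomially in $n$ and $k$, this is a polynomial-time reduction in both the constant-$k$ and the input-$k$ regimes.

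The main obstacle, as in the single-stage case, is making the budget mechanism do exactly the right thing at every stage \emph{simultaneously}: a dishonest move at any one block must both let the player escape (discharging all type-(i) clauses through that block's $s^j$) and still leave every later block's internal clauses (ii),(iii) satisfiable whatever the adversary does afterwards, while an all-honest adversary play must be forced to behave like a genuine QBF adversary. The point that makes this work is that the escape decision $s^{j_0}=1$ is taken at stage $2j_0+1$, \emph{before} the adversary's later moves, and is irrevocable, so it is immune to subsequent budgeted moves; and the internal clauses of any untouched or future block can always be satisfied trivially by zeroing its $s$-variables. A possible ``mixed'' adversary strategy (honest on some blocks, dishonest on others) causes no trouble because a single dishonest block already discharges every type-(i) clause, so only the all-honest case carries the reduction — and that case is precisely the QBF game.
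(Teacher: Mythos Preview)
Your proposal is correct and is precisely the adaptation the paper has in mind; the paper itself gives no detailed proof and only states that \cref{thm:multi-stage-gamma-sat} ``can be proven by adapting the proof of \cref{thm:gamma-SAT}'', and your construction (one $(y^{t,j},y^{f,j},s^j_\cdot,s^j)$-gadget per universal block, the escape literals $s^1,\dots,s^{k-1}$ added to every replaced clause, $\Gamma=n$) is the natural block-wise iteration of that argument. The honest/dishonest case analysis and the final size-equalization step match the paper's treatment of the single-block case.
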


\section{Continuous Budgeted Uncertainty}
\label{sec:contbudgeted}

In this section, we study adjustable robust optimization problems with 
continuous budgeted uncertainty $\mathcal{Z}^{c}$ and general mixed integer programming constraints (MIP).

% The adjustable robust problem set problem is the robust problem of choosing an independent set in two stages: First choose a partial independent set $I_1$ in the first stage, then after the reveal of the uncertain cost function $c$ choose the remainder $I_2$ of the independent set. We wish to maximize the cost $c(I_1 \cup I_2)$ in the worst case. Observe that this is a maximization problem, in contrast to many other problems considered in robust optimization which are minimization problems.

%  The problem of finding a robust two-stage independent set is formally defined as

% \begin{equation*}
% \textsc{Rob} = \max_{\pmb x \in \set{0,1}^V} \min_{\pmb c \in \cU_\Gamma } \max_{\pmb y \in \X(\pmb x)} \pmb C \pmb x + \pmb c \pmb y, 
% \end{equation*} 

% where $G = (V,E)$ denotes the input graph, $\pmb C \in \R_{\geq 0}^V$ denotes the first-stage costs, $\X$ denotes the set of all binary indicator vectors of independent sets in that graph, and $\X(\pmb x) = \set{y \in \set{0,1}^V \mid \pmb x + \pmb y \in \X}$ denotes the set of all second-stage solutions $\pmb y$ such that $\pmb y$ together with $\pmb x$ forms an independent set. To treat the case that a first-stage vector $\pmb x$ is selected which can not be completed to an independent set, i.e. $\X(\pmb x) = \emptyset$ we define $\max \emptyset = -\infty$. This means the solution has the objective value $-\infty$ in that case.

We recall the following observation from \cite{bold2020}. 

\begin{theorem}\label{th1}
Let an adjustable robust problem with uncertainty in the objective of the form
\[ \min_{\pmb{x}\in \X'} \max_{\pmb{\zeta}\in\mathcal{Z}} \min_{\pmb{y}\in\X(\pmb{x})} f(\pmb{x},\pmb{y},\pmb{\zeta}) \]
be given with a compact convex set $\mathcal{Z}$ and a function $f$ that is linear in $\pmb{y}$ and concave in $\pmb{\zeta}$. Then this problem is equivalent to
\[ \min_{\pmb{x}\in\X'} \min_{\pmb{y}^{(1)},\ldots,\pmb{y}^{(n+1)}\in\X(\pmb{x})} \max_{\zeta\in\mathcal{Z}} \min_{i\in[n+1]} f(\pmb{x},\pmb{y}^{(i)},\pmb{\zeta}). \]
\end{theorem}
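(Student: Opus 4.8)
The plan is to fix the first-stage vector $\pmb{x}\in\X'$ and show that the two inner values agree pointwise. Write
\[
v_1(\pmb{x}) := \max_{\pmb{\zeta}\in\mathcal{Z}}\ \min_{\pmb{y}\in\X(\pmb{x})} f(\pmb{x},\pmb{y},\pmb{\zeta}),
\qquad
v_2(\pmb{x}) := \min_{\pmb{y}^{(1)},\dots,\pmb{y}^{(n+1)}\in\X(\pmb{x})}\ \max_{\pmb{\zeta}\in\mathcal{Z}}\ \min_{i\in[n+1]} f(\pmb{x},\pmb{y}^{(i)},\pmb{\zeta}).
\]
I would prove $v_1(\pmb{x})=v_2(\pmb{x})$ for every $\pmb{x}$; taking the minimum over $\X'$ then gives the claimed equivalence. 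Here $n$ is the dimension of (the affine hull of) $\mathcal{Z}$. I will first treat the case that $\X(\pmb{x})$ is finite, which covers all combinatorial applications in this paper, and indicate the general case afterwards. Since $\mathcal{Z}$ is compact and $g(\pmb{\zeta}) := \min_{\pmb{y}\in\X(\pmb{x})} f(\pmb{x},\pmb{y},\pmb{\zeta})$ is a minimum of finitely many functions that are concave (hence continuous) in $\pmb{\zeta}$, the function $g$ is concave and continuous, so the maximum defining $v_1(\pmb{x})$ is attained.

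The inequality $v_1(\pmb{x})\le v_2(\pmb{x})$ is the routine direction and I would dispatch it first: for any $\pmb{y}^{(1)},\dots,\pmb{y}^{(n+1)}\in\X(\pmb{x})$ and any $\pmb{\zeta}$, each $\pmb{y}^{(i)}$ is feasible for $\X(\pmb{x})$, so $\min_{i\in[n+1]} f(\pmb{x},\pmb{y}^{(i)},\pmb{\zeta}) \ge g(\pmb{\zeta})$; maximizing over $\pmb{\zeta}\in\mathcal{Z}$ yields $\max_{\pmb{\zeta}}\min_i f(\pmb{x},\pmb{y}^{(i)},\pmb{\zeta}) \ge v_1(\pmb{x})$, and taking the minimum over the choice of the $n+1$ policies preserves this.

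The core of the proof is the reverse inequality $v_2(\pmb{x})\le v_1(\pmb{x})$, for which it suffices to produce $n+1$ recourse solutions $\pmb{y}^{(1)},\dots,\pmb{y}^{(n+1)}\in\X(\pmb{x})$ such that for \emph{every} $\pmb{\zeta}\in\mathcal{Z}$ at least one index $i$ satisfies $f(\pmb{x},\pmb{y}^{(i)},\pmb{\zeta})\le v_1(\pmb{x})$. The idea I would use is a Helly-type covering argument. For each $\pmb{y}\in\X(\pmb{x})$ define the bad-scenario set $G_{\pmb{y}} := \{\pmb{\zeta}\in\mathcal{Z} : f(\pmb{x},\pmb{y},\pmb{\zeta}) > v_1(\pmb{x})\}$. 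Because $f$ is concave in $\pmb{\zeta}$, this strict superlevel set is convex, and $G_{\pmb{y}}\subseteq\mathcal{Z}\subseteq\R^n$. Moreover $\bigcap_{\pmb{y}\in\X(\pmb{x})} G_{\pmb{y}} = \emptyset$: a scenario in this intersection would satisfy $f(\pmb{x},\pmb{y},\pmb{\zeta}) > v_1(\pmb{x})$ for every feasible $\pmb{y}$, hence $g(\pmb{\zeta}) > v_1(\pmb{x})$, contradicting that $v_1(\pmb{x})$ is the maximum of $g$ over $\mathcal{Z}$. Now Helly's theorem in $\R^n$ says that, since this family of convex sets has empty overall intersection, already some subfamily of at most $n+1$ of them has empty intersection; the corresponding solutions $\pmb{y}^{(1)},\dots,\pmb{y}^{(n+1)}$ are what we need, because $\bigcap_{i\in[n+1]} G_{\pmb{y}^{(i)}} = \emptyset$ means precisely that every $\pmb{\zeta}\in\mathcal{Z}$ escapes some $G_{\pmb{y}^{(i)}}$, i.e.\ $\min_{i} f(\pmb{x},\pmb{y}^{(i)},\pmb{\zeta})\le v_1(\pmb{x})$. (If $|\X(\pmb{x})|\le n$, just repeat solutions to reach $n+1$.)

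The main obstacle, and really the only non-formulaic step, is recognizing that concavity of $f$ in $\pmb{\zeta}$ is exactly the hypothesis that turns the bad-scenario sets $G_{\pmb{y}}$ into convex sets, so that a Helly-type bound controls how many static recourse policies are needed — and that this dimension count is what forces the number $n+1$ in the statement. A minor loose end is extending from finite to arbitrary $\X(\pmb{x})$: one can instead work with the compact sets $\{\pmb{\zeta}\in\mathcal{Z} : f(\pmb{x},\pmb{y},\pmb{\zeta}) \ge v_1(\pmb{x}) + \varepsilon\}$, apply the compact (infinite-family) version of Helly's theorem to get $v_2(\pmb{x}) \le v_1(\pmb{x}) + \varepsilon$ for all $\varepsilon>0$, and then let $\varepsilon \downarrow 0$ — which is immediate when $\X(\pmb{x})$ is finite by pigeonhole on the finitely many $(n+1)$-tuples. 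Concavity in $\pmb{\zeta}$ is thus the hypothesis doing the work in this argument; linearity of $f$ in $\pmb{y}$ is what keeps the reformulated problem amenable to a mixed-integer programming formulation in the subsequent sections.
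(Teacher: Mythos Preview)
Your argument is correct but takes a genuinely different route from the paper. The paper fixes $\pmb{x}$, uses linearity of $f$ in $\pmb{y}$ to replace $\X(\pmb{x})$ by $\operatorname{conv}(\X(\pmb{x}))$, applies the minimax theorem to swap $\max_{\pmb{\zeta}}$ and $\min_{\pmb{y}}$, and then invokes Carath\'eodory's theorem in the $\pmb{y}$-space to write any point of the convex hull as a combination of $n+1$ extreme solutions; a second application of minimax and linearity in $\pmb{y}$ collapses the convex combination to a minimum over indices. Your Helly argument works in the dual $\pmb{\zeta}$-space instead: the superlevel sets $G_{\pmb{y}}$ are convex by concavity in $\pmb{\zeta}$, their overall intersection is empty, and Helly extracts $n+1$ of them with empty intersection. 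Two consequences of this difference are worth noting. First, your proof never uses the linearity of $f$ in $\pmb{y}$, so it actually establishes a stronger statement than what is claimed (your closing remark that linearity in $\pmb{y}$ is only needed downstream is accurate for your proof, but in the paper's proof linearity in $\pmb{y}$ is essential). Second, the ``$n$'' produced by the two arguments is not the same: the paper's Carath\'eodory bound is governed by the dimension of the $\pmb{y}$-space, whereas your Helly bound is governed by the dimension of $\mathcal{Z}$; in the combinatorial applications here these coincide, but in general each approach may give the sharper constant depending on which space is smaller.
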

\begin{proof}
Using Carath\'eodory's theorem and the minimax theorem, we conclude the following equalities, where $\Delta_n$ denotes the $n$-simplex.
\begin{align*}
&\min_{\pmb{x}\in\X'} \max_{\zeta\in\mathcal{Z}}\min_{\pmb{y}\in\X(\pmb{x})} f(\pmb{x},\pmb{y},\pmb{\zeta}) \\
= & \min_{\pmb{x}\in\X'} \max_{\zeta\in\mathcal{Z}}\min_{\pmb{y}\in conv(\X(\pmb{x}))} f(\pmb{x},\pmb{y},\pmb{\zeta}) \\
= & \min_{\pmb{x}\in\X'} \min_{\pmb{y}\in conv(\X(\pmb{x}))} \max_{\zeta\in\mathcal{Z}} f(\pmb{x},\pmb{y},\pmb{\zeta}) \\
= & \min_{\pmb{x}\in\X'} 
\min_{\pmb{y}^{(1)},\ldots,\pmb{y}^{(n+1)}\in\X(\pmb{x})} \min_{\pmb{\lambda}\in\Delta_{n+1}} 
\max_{\zeta\in\mathcal{Z}} f(\pmb{x}, \sum_{i=1}^{n+1}\lambda_i\pmb{y}^{(i)},\pmb{\zeta}) \\
= & \min_{\pmb{x}\in\X'} \min_{\pmb{y}^{(1)},\ldots,\pmb{y}^{(n+1)}\in\X(\pmb{x})}  
\max_{\zeta\in\mathcal{Z}} \min_{\pmb{\lambda}\in\Delta_{n+1}} f(\pmb{x}, \sum_{i=1}^{n+1}\lambda_i\pmb{y}^{(i)},\pmb{\zeta}) \\
= & \min_{\pmb{x}\in\X'} \min_{\pmb{y}^{(1)},\ldots,\pmb{y}^{(n+1)}\in\X(\pmb{x})}  
\max_{\zeta\in\mathcal{Z}} \min_{\pmb{\lambda}\in\Delta_{n+1}} \sum_{i=1}^{n+1}\lambda_i f(\pmb{x}, \pmb{y}^{(i)},\pmb{\zeta}) \\
= & \min_{\pmb{x}\in\X'} \min_{\pmb{y}^{(1)},\ldots,\pmb{y}^{(n+1)}\in\X(\pmb{x})}  
\max_{\zeta\in\mathcal{Z}} \min_{i\in[n+1]} f(\pmb{x}, \pmb{y}^{(i)},\pmb{\zeta}) 
\end{align*}
\end{proof}

\begin{cor}
Under the assumptions of Theorem~\ref{th1}, two-stage and recoverable robust problems with continuous budgeted uncertainty are in NP.
\end{cor}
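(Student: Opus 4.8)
The plan is to read off a polynomial-size certificate from the equivalent formulation established in Theorem~\ref{th1}. For a two-stage robust problem with objective uncertainty satisfying the hypotheses there, the optimal value equals
\[ \min_{\pmb{x}\in\X'}\ \min_{\pmb{y}^{(1)},\ldots,\pmb{y}^{(n+1)}\in\X(\pmb{x})}\ \max_{\pmb{\zeta}\in\mathcal{Z}^c}\ \min_{i\in[n+1]} f(\pmb{x},\pmb{y}^{(i)},\pmb{\zeta}), \]
and in the combinatorial setting $\X'=\{0,1\}^n$ and $\X(\pmb{x})\subseteq\{0,1\}^n$. Hence, for the decision version (``is the optimal value at most $\vartheta$?''), a nondeterministic algorithm guesses $\pmb{x}\in\X'$ together with the $n+1$ candidate recourse vectors $\pmb{y}^{(1)},\ldots,\pmb{y}^{(n+1)}\in\X(\pmb{x})$; all of these vectors have polynomial encoding length, and the feasibility conditions $\pmb{x}\in\X'$ and $\pmb{y}^{(i)}\in\X(\pmb{x})$ are polynomial-time checkable.

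It then remains to evaluate $\max_{\pmb{\zeta}\in\mathcal{Z}^c}\min_{i\in[n+1]} f(\pmb{x},\pmb{y}^{(i)},\pmb{\zeta})$ and compare it with $\vartheta$; this is the only step with any content. With $\pmb{x}$ and the $\pmb{y}^{(i)}$ fixed and with affine cost and right-hand side functions, each map $\pmb{\zeta}\mapsto f(\pmb{x},\pmb{y}^{(i)},\pmb{\zeta})$ is affine, so the quantity in question is the maximum of a concave piecewise-linear function over the polytope $\mathcal{Z}^c=\{\pmb{\zeta}\in[0,1]^\ell:\sum_j\zeta_j\le\Gamma\}$. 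It therefore equals the optimal value of the linear program
\[ \max\ t \quad\text{s.t.}\quad t\le f(\pmb{x},\pmb{y}^{(i)},\pmb{\zeta})\ \ (i\in[n+1]),\quad \pmb{0}\le\pmb{\zeta}\le\pmb{1},\quad \sum_{j}\zeta_j\le\Gamma, \]
which has $\ell+1$ variables, $O(n+\ell)$ constraints, and coefficients taken directly from the instance. This LP is solvable in polynomial time, the comparison with $\vartheta$ is immediate, and so the whole verification runs in polynomial time, which puts the problem in NP.

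The recoverable robust problem is the special case in which $\X(\pmb{x})$ additionally encodes the bound on how far the second stage may differ from $\pmb{x}$; since this is again a linear constraint on the $0/1$ variables, none of the above changes, so the same argument applies. The only point requiring a word of care beyond the purely combinatorial case is the general mixed-integer setting of Section~\ref{sec:contbudgeted}: there one must additionally invoke the standard bound on the encoding length of an optimal vertex of the continuous relaxation obtained after fixing the integer components, in order to guarantee that the $n+1$ guessed recourse solutions may be chosen of polynomially bounded size. This is routine, and is the closest thing to an obstacle in the argument.
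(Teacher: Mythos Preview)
Your argument is correct and follows essentially the same route as the paper: the certificate is $\pmb{x}$ together with the $n+1$ recourse solutions furnished by Theorem~\ref{th1}, and verification amounts to solving the adversary's inner problem in polynomial time. The only difference is in that verification step---the paper appeals to the ellipsoid method for maximizing a concave function over a convex set with a separation oracle \cite{grotschel1981ellipsoid}, whereas you exploit the affinity of the cost functions in the budgeted setting to cast the adversary's problem directly as an explicit LP, which is more elementary and self-contained for this specific case while the paper's phrasing also covers general concave (not necessarily affine) dependence on $\pmb{\zeta}$.
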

\begin{proof}
As $f(\pmb{x},\pmb{y},\pmb{\zeta})$ is concave in $\pmb{\zeta}$, the function $\min_{i\in[n+1]} f(\pmb{x},\pmb{y}^{(i)},\pmb{\zeta})$ remains concave in $\pmb{\zeta}$. Hence, the adversary problem is to maximize a concave function over a compact convex set. As it is possible to separate continuous budgeted uncertainty sets, the adversary problem can be solved in polynomial time \cite{grotschel1981ellipsoid}. Hence, it is possible to give a certificate of polynomial size that can be checked in polynomial time, which means that the two-stage problem is in NP. As the recoverable problem can be framed as a special case (compare Theorem~\ref{th1} with the result in \cite{bold2020}), this result also holds in this case.
\end{proof}

\begin{comment}
\begin{lemma}\label{lemma:makebinary}
    The optimization problem 
    \[ \max_{\zeta \in [0,1]} \min \{ y \colon y \geq \zeta, y \geq 1-\zeta, y \in \mathbb{R} \} \]
    has $\zeta \in \{0,1\}$ as optimal solutions.
\end{lemma}

\begin{lemma}\label{lemma:makebinary}
    The optimization problem 
    \[ \max_{\zeta \in [0,1]} \min \{ -y \colon y \leq \zeta, y \leq 1-\zeta, y \in \mathbb{R} \} \]
    has $\zeta \in \{0,1\}$ as optimal solutions.
\end{lemma}
\end{comment}

% \begin{lemma}\label{lemma:makebinary}
%     The optimization problem 
%     \[ \max_{\zeta \in [0,1]} \min \{ y \colon y \geq \zeta, y \geq 1-\zeta, y \in \mathbb{R} \} \]
%     has $\zeta \in \{0,1\}$ as optimal solutions.
% \end{lemma}

While this case of row-wise uncertainty remains in NP, we now show that 
with column-wise uncertainty (in the right-hand-side vector), 
the computational complexity of the adjustable robust problem can jump to a higher level, 
even if the uncertainty set remains a continuous budgeted set.

For the sake of this result we we consider the feasibility variant of the adjustable robust mixed integer programming problem with (mixed) binary recourse
and continuous budgeted uncertainty in the right-hand-side vector of the constraints.
Formally, we consider the problem
\begin{align*}
\exists\;\pmb{x}\; \forall\;\zeta \in \mathcal{Z}\; \exists\;\pmb{y} \colon & A\pmb{x} + B\pmb{y} \leq \pmb{d}(\pmb{\zeta})\\
& \pmb{x} = (\pmb{x}^c, \pmb{x}^d) \in (\mathbb{R}^{m_c}, \mathbb{Z}^{m_d})\\
& \pmb{y} = (\pmb{y}^c, \pmb{y}^d) \in (\mathbb{R}^{n_c}, \mathbb{Z}^{n_d}),
\end{align*}
where $A, B, d(\cdot)$ are the coefficient matrices and right-hand side vector
of a general linear mixed integer program.

\begin{theorem}
\label{thm:mixed-binary-recourse}
The feasibility variant of the adjustable robust mixed integer programming problem with (mixed) binary recourse and continuous budgeted uncertainty in the right-hand-side vector of the constraints is $\Sigma_3^p$-hard.
\end{theorem}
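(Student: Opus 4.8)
The plan is to reduce from \radj, which is $\Sigma_3^p$-complete by Theorem~\ref{thm:gamma-SAT}. Given a \radj instance $(\varphi,X,Y,Z,\Gamma)$ with clauses $C_1,\dots,C_m$, I would build a feasibility instance of the adjustable robust MIP problem with continuous budgeted right-hand-side uncertainty as follows. The first-stage variables are binary variables $x_i$, one per variable in $X$. The recourse variables are binary variables $w_i$ for each $y_i\in Y$ and $v_j$ for each $z_j\in Z$ (no continuous recourse is needed). Each clause $C_k$ becomes one constraint $\sum_{\ell\in C_k}\mathrm{val}(\ell)\ge 1$, where $\mathrm{val}$ sends a positive literal on a variable to the associated $\{x,w,v\}$-variable and a negative literal to one minus it; after collecting constants these have the form $A\pmb x+B\pmb y\le\pmb d$ with no $\zeta$-dependence. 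The uncertainty enters only through \emph{forcing} constraints: for each $i$ I add two uncertainty coordinates $\zeta_i^a,\zeta_i^b\in[0,1]$ and the constraint $w_i\le 2-\zeta_i^a-\zeta_i^b$, whose right-hand side is affine in $\zeta$. Since $w_i$ is binary, this forces $w_i=0$ exactly when $\zeta_i^a+\zeta_i^b>1$. Finally I set the uncertainty budget of the continuous budgeted set to $\Gamma+1$.

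The step where the continuous uncertainty set really has to be confronted — and the one I expect to be the main obstacle — is precisely the design of this forcing gadget together with the budget. With a \emph{discrete} budgeted set the construction would be an almost verbatim restatement of \radj, using one coordinate $\zeta_i$ per $y_i$ and the constraint $w_i\le 1-\zeta_i$. With a continuous set that naive choice fails badly: since $w_i$ is binary, any $\zeta_i>0$ already forces $w_i=0$, so the adversary could spend an arbitrarily small amount per coordinate and force all of $Y$ at once, which would yield only NP-hardness (and in fact an incorrect reduction). The coefficient $2$ repairs this by creating a threshold — the adversary must invest strictly more than one unit of budget on the pair $(\zeta_i^a,\zeta_i^b)$ to force $w_i$. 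It then remains to verify that $\Gamma+1$ is simultaneously small enough that every $\zeta$ in the continuous budgeted set forces at most $\Gamma$ of the $w_i$ (each forced index consumes strictly more than one unit, so the number of forced indices is strictly below $\Gamma+1$), and large enough that every discrete adversary move of \radj can be realized: a subset $Y'\subseteq Y$ with $|Y'|\le\Gamma$ is realized by setting $\zeta_i^a=\zeta_i^b=\tfrac12+\varepsilon$ for $i$ with $y_i\in Y'$ and $0$ otherwise, which has $\ell_1$-weight $|Y'|(1+2\varepsilon)\le\Gamma(1+2\varepsilon)\le\Gamma+1$ for $\varepsilon:=\tfrac1{2(\Gamma+1)}$. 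The ``headroom'' between the threshold $1$ and the maximum $2$ of $\zeta_i^a+\zeta_i^b$ is exactly what makes these two requirements compatible.

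With the gadget in place the equivalence is routine in both directions. If \radj is a Yes-instance with witnessing assignment $\alpha$ of $X$, set $x_i:=\alpha(x_i)$; for any $\zeta$ in the budgeted set put $Y':=\{y_i:\zeta_i^a+\zeta_i^b>1\}$, so $|Y'|\le\Gamma$, take the satisfying completion of $(Y\setminus Y')\cup Z$ promised by \radj, and read off $w$ (which is $0$ on $Y'$, exactly as forced, and free elsewhere) and $v$; all clause and forcing constraints then hold, so a recourse exists. Conversely, if the robust MIP is feasible with first stage $\pmb x^\ast$, let $\alpha$ be the induced Boolean assignment of $X$; for any $Y'$ with $|Y'|\le\Gamma$ plug in the $\zeta$ described above, which forces exactly $w_i=0$ on $Y'$; any feasible recourse then gives an assignment of $(Y\setminus Y')\cup Z$ satisfying all clauses (all $\mathrm{val}$-values being binary), i.e.\ a valid \radj completion. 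Thus the two instances are equivalent, the reduction is clearly polynomial, and $\Sigma_3^p$-hardness follows. I would also add a remark that this is consistent with the Corollary above: there the recourse ranges over a convex set and the minimax/Carath\'eodory argument applies, whereas here the recourse is genuinely integral, which is why the adversary's extra freedom in the continuous set cannot be neutralized — and why only hardness, not membership in $\Sigma_3^p$, is claimed.
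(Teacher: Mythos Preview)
Your proposal is correct and follows essentially the same route as the paper: reduce from {\radj}, encode clauses as linear inequalities in binary first-stage and recourse variables, and use a threshold-type forcing constraint so that the continuous adversary must spend a bounded-away-from-zero amount of budget to force any single $Y$-variable to $0$. The only real difference is the gadget implementation: the paper uses a single uncertainty coordinate per variable with an $\varepsilon$-shifted threshold ($w_i \le 2 - \varepsilon - \zeta_i$ and budget $\Gamma$, with $\varepsilon = 1/(n\Gamma)$), whereas you use two coordinates and a clean integer threshold ($w_i \le 2 - \zeta_i^a - \zeta_i^b$ and budget $\Gamma+1$); both devices buy exactly the same thing, namely that at most $\Gamma$ variables can be forced while any prescribed set of size $\le \Gamma$ can be forced, and your version is arguably tidier since it avoids the $\varepsilon$-arithmetic.
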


\begin{proof}
    We prove the claim by giving an reduction from the $\Sigma_3^p$-complete
    {\radj} problem. 
    Let an instance of {\radj} be given by a formula $\varphi$ in conjunctive normal form 
    on the variable set $X \cup Y \cup Z$ with $X=\{x_1, \dots, x_n\}$, $Y=\{y_1,\dots,y_n\}$ and $Z=\{z_1,\dots,z_n\}$,
    and a parameter $\Gamma \geq 0$. 
    Let $n = |X| = |Y| = |Z|$ be the number of variables in each set and let $\ell \in \N$ be the number of clauses in $\varphi$. 
    
    We construct an instance of the adjustable robust problem with binary recourse and continuous budgeted uncertainty
    in the constraints.
    The core of this construction is the straightforward reduction from 3SAT to the feasibility problem of MIPs.
    Hence we construct variable vectors $\pmb{x}', \pmb{y}'(\cdot), \pmb{z}'(\cdot)$ consisting of binary variables $x'_i, y'_i(\cdot), z'_i(\cdot)$ for 
    $i=1,\dots,n$ that correspond to the variables $x_i,y_i,z_i$ of the {\radj} instance.
    In our reduction we use a continuous budgeted uncertainty of $\mathcal{Z}^c = \{\pmb{\zeta}' \in [0,1]^n \colon \| \pmb{\zeta}' \| \leq \Gamma'\}$ of dimension $n$ with $\Gamma' = \Gamma$.
    The variables $\pmb{x}'$ correspond to the first stage decision and the variables $(\pmb{y}'(\cdot), \pmb{z}'(\cdot))$
    correspond to the second stage decision and depend on the uncertainty $\pmb{\zeta}'$.
    For convenience of notation we will omit this explicit dependence on $\pmb{\zeta}$ and just write $\pmb{y}'$ and $\pmb{z}'$ instead of $\pmb{y}'(\zeta)$ and $\pmb{z}'(\zeta')$.
    We define a replacement function $r$ that transforms each literal of $\varphi$ into a linear function using its corresponding binary variable.
    For each $i=1,\dots,n$ and literal $x_i$ or $\bar{x_i}$ of variables from $X$ the function $r$ is defined as 
    $r(x_i) = x'_i$ and $r(\bar{x}_i) = 1-x'_i$.
    Similarly, for each $i=1,\dots,n$ and literals $y_i, \bar{y}_i, z_i, \bar{z}_i$ we define 
    $r(y_i) = y'_i$, $r(\bar{y}_i) = 1-y'_i$, $r(z_i) = z'_i$ and $r(\bar{z}_i) = 1-z'_i$.
    Based on that, for each clause 
    \[ l_1 \lor l_2 \lor \dots \lor l_k \]
    in $\varphi$ we add the linear constraint
    \[ r(l_1) + r(l_2) + \dots + r(l_k) \geq 1 \] 
    to our new instance. Note that by identifying true and false assignments of $x_i \in X$ with $0$ and $1$ 
    assignments of $x'_i$ for $i=1,\dots,n$ and similarly for $Y$, $\pmb{y}'$ and $Z$, $\pmb{z}'$ variables 
    it holds that the feasible assignments for the clause $l_1 \lor l_2 \lor \dots \lor l_k$ are in one to one 
    correspondence with the feasible assignments for $r(l_1) + r(l_2) + \dots + r(l_k) \geq 1$.

    The main technical challenge is to encode the binary decisions of the adversary 
    in {\radj} using the continuous budgeted uncertainty.
    We need to model the fact that the adversary can force up to $\Gamma$ variables from $Z$ to false.
    To achieve this we add the constraints
    \[ z'_i \leq 2 - \varepsilon - \zeta_i \]
    for $i=1,\dots,n$ with $\varepsilon := \frac{1}{n \Gamma}$ to our instance.
    Observe, that this is the only place where the uncertainty $\pmb{\zeta}$ appears in our 
    reduction and it it appears in affine-linear form as part of the right-hand side of these constraints.
    Note, that this constraint only affects the second stage decision for $z'_i$ if 
    $\zeta_i > (1-\varepsilon)$. In this case the value of $z'_i$ is forced to be $0$. Hence, the adversarial decision to set $z'_i$ to a value larger 
    than $(1-\varepsilon)$ is in one to one correspondence with forcing $z_i$ to false.
    Given any attack $S$ of up to $\Gamma$ variables in the original {\radj} instance, 
    it is trivial that the corresponding set of $\pmb{z}$ variables can be attacked by 
    setting $\zeta_i = 1$ for $i \in S$. Clearly, for such an assignment we have $\| \zeta \|_1 = |S| \leq \Gamma'$ holds. 
    What remains to show is that the adversary can attack at most $\Gamma$ distinct 
    variables from $\pmb{z}'$ like that.
    Since $\varepsilon < \frac{1}{n \Gamma}$ the adversary has to invest $ > \frac{n^2-1}{n \Gamma}$ from its budget $\Gamma'$ for each $i$ to
    force $z'_i$ to $0$.
    Since after investing $\Gamma$ times $ > \frac{n^2-1}{n^2}$ the adversary has invested $> \frac{n^2-1}{n}$,
    leaving a remaining budget of $< n - \frac{n^2-1}{n} = \frac{1}{n} < \frac{n^2-1}{n \Gamma}$, since $\Gamma < n$.

    This concludes the proof of the $\Sigma^p_3$-hardness.
\end{proof}

% Note that this is not in contradiction to remark 1, as we have uncertainty in the constraints. 

\section{Discrete Budgeted Uncertainty}
\label{sec:discbudgeted}

In this section, we consider \emph{discrete budgeted uncertainty} for robust multi-stage versions of some famous classical problems, namely the traveling salesman problem (TSP), the independent set problem, and the vertex cover problem. We consider both \emph{two-stage adjustable} robustness, as well as \emph{recoverable} robustness. We prove that all considered problems are $\Sigma_3^p$-complete. The results also generalize from two-stage robust problems to $k$-stage robust problems with $k > 2$ (we obtain $\Sigma^p_{2k-1}$-hardness then). As a consequence, under common hardness assumptions all of the problems above can not be expressed as a polynomial-sized mixed integer program. This makes it very challenging for existing MIP-solvers to tackle these problems and so new techniques likely need to be developed. 

This is in contrast to Section~\ref{sec:contbudgeted}, where we studied the same robust problems under \emph{continuous} budgeted uncertainty and showed that they were contained in the class NP, i.e.\ not $\Sigma_3^p$-complete. 

We remark that all hardness proofs in this section work basically the same way: For each problem, we consider its classical reduction from SAT which was initially used to show NP-hardness, and modify this reduction to work with {\radj} instead. Informally speaking, this modification is especially easy if the classical reduction is composed out of variable gadgets and clause gadgets (i.e.\ parts of the reduced instance which mimic the behavior of the variables and the clauses from the original SAT instance). As this is a very general approach, we believe it can also be adapted to other problems.

The remainder of this section is structured as follows: In \cref{subsec:two-stage-ind-set,subsec:rec-ind-set} we consider two-stage and recoverable independent set. In \cref{subsec:two-stage-tsp,subsec:rec-tsp} we consider two-stage and recoverable . In \cref{subsec:vertexcover} we consider two-stage and recoverable vertex cover. Finally, in \cref{subsec:multistage}, we consider generalizations to multi-stage problems.

\subsection{Robust Two-Stage Independent Set}
\label{subsec:two-stage-ind-set}

The robust two-stage independent set problem is the robust problem of choosing an independent set in two stages: First choose a partial independent set $I_1$ in the first stage, then after the reveal of the uncertain cost function $c$ choose the remainder $I_2$ of the independent set. We wish to maximize the cost $c(I_1 \cup I_2)$ in the worst case. Observe that this is a maximization problem, in contrast to many other problems considered in robust optimization which are minimization problems.

 The problem of finding a robust two-stage independent set is formally defined as

\begin{equation*}
\textsc{Rob} = \max_{\pmb x \in \set{0,1}^V} \min_{\pmb c \in \cU_\Gamma } \max_{\pmb y \in \X(\pmb x)} \pmb C \pmb x + \pmb c \pmb y, 
\end{equation*} 

where $G = (V,E)$ denotes the input graph, $\pmb C \in \R_{\geq 0}^V$ denotes the first-stage costs, $\X$ denotes the set of all binary indicator vectors of independent sets in that graph, and $\X(\pmb x) = \set{y \in \set{0,1}^V \mid \pmb x + \pmb y \in \X}$ denotes the set of all second-stage solutions $\pmb y$ such that $\pmb y$ together with $\pmb x$ forms an independent set. To treat the case that a first-stage vector $\pmb x$ is selected which can not be completed to an independent set, i.e. $\X(\pmb x) = \emptyset$ we define $\max \emptyset = -\infty$. This means the solution has the objective value $-\infty$ in that case.

Finally, for given constants $\underline{c}_i \geq 0$ and $d_i \leq 0$ for all $i \in V$ and some integer $\Gamma \geq 0$, the set $\cU_\Gamma$ of uncertain cost functions is defined as
\[
\cU_\Gamma = \set{ \pmb c \in \R^V : c_i = \underline{c}_i + \delta_id_i, \delta_i \in \set{0,1}\ \forall i \in V,\ \sum_{i \in V}\delta_i \leq \Gamma }.
\]
In other words, $\cU_\Gamma$ contains those cost functions where at most $\Gamma$ entries deviate from their nominal cost $\underline{c}_i$. Note that because we have a maximization problem, we have $d_i \leq 0$. Finally, we define $\overline{c}_i = \underline{c}_i + d_i$.

\tikzstyle{vertex}=[draw,circle,fill=black, minimum size=4pt,inner sep=0pt]
\tikzstyle{edge} = [draw,-]
\tikzset{
cross/.style={path picture={ 
  \draw[black]
(path picture bounding box.south east) -- (path picture bounding box.north west) (path picture bounding box.south west) -- (path picture bounding box.north east);
}}
}
\tikzstyle{XORGadget}=[draw,circle,cross,minimum width=0.5,fill=white]
\tikzstyle{XOREdge}=[edge,rounded corners,{Stealth}-{Stealth}]
\begin{figure}[thpb]
\centering
\begin{tikzpicture}[scale=1,auto]
\node[vertex] (x3) at (0,10) {};
\node[above] at (x3) {$x_3$};
\node[vertex] (notx3) at (2,10) {};
\node[right] at (notx3) {$\overline x_3$};
\draw[edge] (x3) to (notx3);

\node[vertex] (x2) at (0,11) {};
\node[above] at (x2) {$x_2$};
\node[vertex] (notx2) at (2,11) {};
\node[above] at (notx2) {$\overline x_2$};
\draw[edge] (x2) to (notx2);

\node[vertex] (x1) at (0,12) {};
\node[above] at (x1) {$x_1$};
\node[vertex] (notx1) at (2,12) {};
\node[above] at (notx1) {$\overline x_1$};
\draw[edge] (x1) to (notx1);

\node[vertex] (c1) at (5,12) {};
\node[vertex] (c2) at (4,11.5) {};
\node[vertex] (c3) at (5,11) {};
\draw[edge] (c1) to (c2) to (c3) to (c1);
\draw[edge] (notx1) to (c1);
\draw[edge] (notx2) to (c2);
\draw[edge] (x3) to (c3);

\node[below=10] at (c3) {$x_1 \lor x_2 \lor \overline x_3$};
\end{tikzpicture}
\caption{Classical reduction of 3SAT to the max independent set problem.}
\label{fig:max-clique-classic-reduction}
\end{figure}
\begin{theorem}
\label{thm:discr-two-stage-ind-set}
Robust two-stage independent set with discrete budgeted uncertainty is $\Sigma_3^p$-complete.
\end{theorem}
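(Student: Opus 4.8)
**Proof plan for Theorem \ref{thm:discr-two-stage-ind-set} (Robust two-stage independent set with discrete budgeted uncertainty is $\Sigma_3^p$-complete).**

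The plan is to show membership in $\Sigma_3^p$ first and then reduce from {\radj}, following the template announced in the section preamble: take the classical reduction from 3SAT to max independent set (Figure \ref{fig:max-clique-classic-reduction}) and upgrade it so that the roles of the $X$-, $Y$-, and $Z$-variables from {\radj} are faithfully simulated. Membership in $\Sigma_3^p$ is routine: the problem asks "$\exists \pmb x\; \forall \pmb c\in\cU_\Gamma\; \exists \pmb y$ such that $\pmb y\in\X(\pmb x)$ and the objective meets a threshold", where the middle quantifier ranges over the polynomially-many-bit-describable choices of which $\le\Gamma$ coordinates deviate, so this is a three-alternation predicate with a polynomial-time checkable matrix, hence in $\Sigma_3^p$.

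For hardness, given a {\radj} instance $(\varphi, X, Y, Z, \Gamma)$ with $|X|=|Y|=|Z|=n$ and all clauses of length exactly three, I would build a graph $G$ as follows. For each variable $v\in X\cup Y\cup Z$ put an edge between two vertices labelled $v$ and $\overline v$ (the variable gadget), and for each clause a triangle whose three vertices are joined to the literal-vertices occurring in that clause, exactly as in the classical construction. The first-stage variable $\pmb x$ should be forced to pick, for each $X$-variable, exactly one of $\{x_i,\overline x_i\}$ (this encodes the assignment of $X$); the second stage then picks the truth vertices for $Y$ and $Z$ together with one vertex per clause triangle. The crucial device is to make the adversary's choice of $Y'\subseteq Y$, $|Y'|\le\Gamma$, correspond to "knocking down" the cost of certain $Y$-literal vertices: give the two vertices $y_i$ and $\overline y_i$ (or rather the vertex encoding "$y_i$ is available") a nominal cost that the adversary can drop to $0$ via a deviation, so that selecting a forced-to-$0$ vertex yields no payoff, mirroring the clause $\overline s_i\lor y_i^t$, $\overline s_i\lor y_i^f$ mechanism of {\radj}. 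One sets the first-stage cost vector $\pmb C$, the nominal costs $\underline c_i$, the deviations $d_i\le 0$, and a target threshold so that the worst-case objective reaches the threshold if and only if the first-stage assignment, against every admissible attack, extends to a satisfying assignment; "useless" vertices and large-cost anchors are used to make the arithmetic of the threshold clean. Setting the uncertainty budget of $\cU_\Gamma$ equal to $\Gamma$ ties the two budgets together.

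The main obstacle, as usual in these reductions, is making the three-level quantifier alternation line up exactly: one must ensure (i) a first-stage choice that is \emph{not} a consistent partial assignment on $X$ (e.g. picks both $x_i$ and $\overline x_i$, or neither) is strictly dominated, so WLOG $\pmb x$ encodes a genuine assignment $f_A$; (ii) the adversary's best response really is to attack a set of $Y$-vertices of size $\Gamma$ corresponding to a legal $Y'$, and cannot gain by "wasting" deviations on $X$- or $Z$- or clause-vertices — this is handled by giving those vertices either zero deviation ($d_i=0$) or by making deviating there pointless; and (iii) once the attack is fixed, the second stage can complete to a full independent set meeting the threshold precisely when the corresponding {\radj} subformula is satisfiable, which uses the independent-set/clause-triangle correspondence of the classical reduction together with the $Z$-variables playing the role of $Z$. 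The bookkeeping for the threshold value and the interplay of $\pmb C$ versus $\pmb c$ (since the first-stage cost is not subject to uncertainty) is the delicate part; I expect to budget the proof around carefully separating "structural" feasibility (is it an independent set?) from "numerical" feasibility (does the cost hit the target?), exactly as the problem's $\max\emptyset=-\infty$ convention invites.
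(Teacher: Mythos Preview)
Your plan is correct and matches the paper's proof essentially step for step: the paper builds exactly the classical 3SAT-to-independent-set graph $G(\varphi)$, sets $V_1$ to be the $X$-literal vertices (first-stage cost $1$, second-stage cost $0$), sets $V_2=\{y_1,\dots,y_n\}$ to be the \emph{positive} $Y$-literal vertices (these are the only attackable vertices, with $\underline c=1$, $\overline c=0$), gives all remaining vertices fixed second-stage cost $1$, takes $\Gamma'=\Gamma$, and shows $\textsc{Rob}\ge 3n+\ell$ iff the {\radj} instance is a Yes-instance. No anchors or dummy vertices are needed---the clean $3n+\ell$ threshold already forces one vertex per gadget, so your points (i)--(iii) fall out directly from the cost table.
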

\begin{proof}
It follows directly form the definition that the problem is contained in the class $\Sigma_3^p$. So it remains to prove $\Sigma_3^p$-hardness. The starting point is the classical reduction of 3SAT to the independent set problem \cite{garey1979computers}. An example is depicted in \cref{fig:max-clique-classic-reduction}. A \emph{variable-choice gadget} consists out of two vertices $x$ and $\overline x$ and an edge between them. For each variable in the 3SAT instance there is a variable-choice gadget. For each clause $x_1 \lor x_2 \lor x_3$ in the 3SAT instance, there is a \emph{clause gadget}, which consists out of a triangle, and a matching between the three vertices of the triangle and the three vertices corresponding to the opposite literals $\overline x_1$, $\overline x_2$ and $\overline x_3$. It is not hard to see that the 3SAT instance can be satisfied if and only if this graph contains an independent set of size $n+\ell$, where $n$ is the number of variables and $\ell$ is the number of clauses in the 3SAT instance. Indeed, vertex $x$ ($\overline x$, respectively) belongs to the independent set, if and only if in the satisfying assignment the variable $x$ is set to true (false, respectively). For every 3SAT formula $\psi$, let $G(\psi)$ denote the corresponding graph we just described. 

In order to prove the $\Sigma_3^p$-hardness of robust two-stage independent set, we modify this reduction to work with {\radj} instead of SAT. 
Let an instance of {\radj} be given by a formula $\varphi$ in conjunctive normal form on the variable set $X \cup Y \cup Z$, and a parameter $\Gamma \geq 0$. Let $n = |X| = |Y| = |Z|$ be the number of variables in each set and let $\ell \in \N$ be the number of clauses in $\varphi$. 
By \cref{thm:gamma-SAT}, this problem is $\Sigma_3^p$-complete. 
We construct an instance of robust independent set, consisting out of a graph, vertex costs $C, \underline{c}, \overline{c}$ and a parameter $\Gamma'$. The reduction is sketched in \cref{fig:max-clique-reduction}. Formally, it is described the following way: 
The graph of the instance is the graph $G(\varphi) =: (V,E)$. 
Observe that this graph has two kinds of vertices: 
First the $6n$ vertices $\fromto{x_1}{x_n} \cup \fromto{\overline{x}_1}{\overline{x}_n} \cup \fromto{y_1}{y_n} \cup \fromto{\overline{y}_1}{\overline{y}_n} \cup \fromto{z_1}{z_n} \cup \fromto{\overline{z}_1}{\overline{z}_n}$ corresponding to the $6n$ literals and secondly the rest of the vertices, corresponding to the clause gadgets. 
We define the vertex sets $V_1 := \fromto{x_1}{x_n} \cup \fromto{\overline{x}_1}{\overline{x}_n}$ and $V_2 := \fromto{y_1}{y_n}$. 
Depending on whether a vertex is in $V_1$, in $V_2$, or neither of these two, we define its costs $C, \underline{c}, \overline{c}$ as specified in \cref{table:independent-set}. 
We remark that these costs have the following properties: First, vertices in $V_1$ are only beneficial to pick, if they are picked in the first stage. 
Secondly, the only vertices, where the adversary can alter the uncertain costs are the vertices in $V_2$.
Finally, we let $\Gamma' := \Gamma$. This completes our description of the robust independent set instance.
\begin{figure}[htpb]
\centering
\begin{tikzpicture}[scale=0.9, auto,swap]

\node[vertex] (zn) at (0,4.5) {};
\node[below] at (zn) {$z_n$};
\node[vertex] (notzn) at (2,4.5) {};
\node[below] at (notzn) {$\overline z_n$};
\draw[edge] (zn) to (notzn);

\node[vertex] (z1) at (0,6) {};
\node[below] at (z1) {$z_1$};
\node[vertex] (notz1) at (2,6) {};
\node[below] at (notz1) {$\overline z_1$};
\draw[edge] (z1) to (notz1);

\node[below=-14.5] at ($(z1)!0.5!(notzn)$) {$\vdots$};

\node[vertex] (yn) at (0,7) {};
\node[above] at (yn) {$y_n$};
\node[vertex] (notyn) at (2,7) {};
\node[above] at (notyn) {$\overline y_n$};
\draw[edge] (yn) to (notyn);

\node[vertex] (y1) at (0,9) {};
\node[below] at (y1) {$y_1$};
\node[vertex] (noty1) at (2,9) {};
\node[below] at (noty1) {$\overline{y}_1$};
\draw[edge] (y1) to (noty1);

\node[below=-14.5] at ($(y1)!0.5!(notyn)$) {$\vdots$};

\node[vertex] (xn) at (0,10) {};
\node[above] at (xn) {$x_n$};
\node[vertex] (notxn) at (2,10) {};
\node[above] at (notxn) {$\overline x_n$};
\draw[edge] (xn) to (notxn);

\node[vertex] (x1) at (0,12) {};
\node[above] at (x1) {$x_1$};
\node[vertex] (notx1) at (2,12) {};
\node[above] at (notx1) {$\overline x_1$};
\draw[edge] (x1) to (notx1);

\node[below=-14.5] at ($(x1)!0.5!(notxn)$) {$\vdots$};

\node[vertex] (c11) at (5,11) {};
\node[vertex] (c12) at (4,10) {};
\node[vertex] (c14) at (5,9) {};
\draw[edge] (c11) to (c12) to (c14) to (c11);
\draw[edge] (x1) to (c11);
\draw[edge] (noty1) to (c12);
\draw[edge] (notz1) to (c14);
\node[above=10] at (c11) {$\overline x_1 \lor y_1 \lor z_1$};
\node[below=5] at (c14) {$\vdots$};

\draw[dashed,rounded corners] ($(x1)+(-.5,+.7)$) rectangle ($(notxn) + (.5,-.4)$);
\node at ($(x1)+(-1,-1)$) {$V_1$};
\draw[dashed,rounded corners] ($(y1)+(-.5,+.4)$) rectangle ($(yn) + (.5,-.4)$);
\node at ($(y1)+(-1,-1)$) {$V_2$};
%\node at (1,13.5) {Variable gadgets};
%\node at (5,13.5) {Clause gadgets};
\end{tikzpicture}
\caption{Reduction from {\radj} to the robust two-stage independent set problem.}
\label{fig:max-clique-reduction}
\end{figure}

Let $\textsc{Rob}$ be the value of the robust two-stage independent set problem, that is
\begin{equation}
\textsc{Rob} = \max_{\pmb x' \in \set{0,1}^V} \min_{\pmb c \in \cU_{\Gamma'} } \max_{\pmb y' \in \X(\pmb x')} \pmb C \pmb x' + \pmb c \pmb y'. \label{eq:two-stage-ind-set}
\end{equation} 
We claim that $\textsc{Rob} \geq 3n + \ell$ if and only if the given {\radj}-instance is a Yes-instance.
\begin{table}
\centering
\begin{tabular}{l|rrr}
& $C_v$ & $\underline{c}_v$ & $\overline{c}_v$ \\
\hline
$v \in V_1$ & 1 & 0 & 0 \\
$v \in V_2$ & 0 & 1 & 0 \\
$v \not\in V_1 \cup V_2$ & 0 & 1 & 1 
\end{tabular}
\caption{Costs assigned to the vertices in the robust independent set instance}
\label{table:independent-set}
\end{table}

\textbf{Claim 1:} If $\varphi$ is a Yes-instance of {\radj}, then $\textsc{Rob} \geq 3n + \ell$. 

\emph{Proof of the claim.} Indeed, in this case there is an assignment $g_1$ of $X$-variables such that for all sets $Y' \subseteq Y$ of size at most $\Gamma$, there is an assignment $g_2$ of the variables in $Y \cup Z$ such that $Y'$ is assigned '$0$' and $\varphi$ is satisfied. 
Let $\pmb x' \in \set{0,1}^V$ be the binary vector such that for all vertices in $V_1$, $\pmb x'$ corresponds to the assignment $g_1$ and for all vertices not in $V_1$, we have $x'_v = 0$. Formally, from the two vertices $x_i$ and $\overline{x}_i$, the binary vector $\pmb x'$ includes exactly the vertex which is true under that assignment $g_1$. 
We claim that using this first-stage solution $\pmb x'$, \cref{eq:two-stage-ind-set} evaluates to at least $3n + \ell$. 
Indeed, let $V_2'$ be the set of vertices, where the adversary decreases the uncertain costs in the second stage. 
Due to the structure of the  vertex costs, the only place where costs can be decreased are vertices in $V_2$. Therefore, we can w.l.o.g.\ assume that $V_2' \subseteq V_2 = \fromto{y_1}{y_n}$. 
By the definition of $\cU_{\Gamma'}$, we have $|V'_2| \leq \Gamma' = \Gamma$. 
We now interpret $V'_2$ as a set of variables, which are forced to be '0' by the adversary.
Because $\varphi$ is a Yes-instance of {\radj}, we can find a second-stage vector $\pmb y' \in \set{0,1}^V$, such that in $\pmb x' + \pmb y'$ there is a vertex from each of the $3n$ variable gadgets and a vertex from each of the $\ell$ clause gadgets, and such that $\pmb x' + \pmb y'$ does not include any vertices where costs have been increased by the adversary (i.e\ it includes no vertices from $V'_2$). 
In total, for each $\pmb c \in \cU_\Gamma$, there exists a second-stage solution $\pmb y'$ such that $\pmb C \pmb x' + \pmb c \pmb y' \geq 3n + \ell$. 
Therefore $\textsc{Rob} \geq 3n + \ell$.

\textbf{Claim 2:} If $\textsc{Rob} \geq 3n + \ell$, then $\varphi$ is a Yes-instance of {\radj}. 

\emph{Proof of the claim.} 
In this case, there is a first-stage solution $\pmb x' \in \set{0,1}^V$, such that for all $\pmb c \in \cU_\Gamma$ there exists a second-stage solution $\pmb y' \in \set{0,1}^V$, 
such that $\pmb x'  + \pmb y'$ is indicator vector of an independent set and $\pmb C\pmb x' + \pmb c\pmb y' \geq 3n + \ell$. 
Observe that the vertex set of the graph $G(\varphi)$ can be partitioned into $3n + \ell$ cliques, where every clique corresponds to a variable gadget or a clique gadget. 
So in order to reach the total cost $\pmb C\pmb x' + \pmb c\pmb y' \geq 3n + \ell$, the solution $\pmb x' + \pmb y'$ must contain a vertex from every such clique. 
In particular, inspecting the cost structure from \cref{table:independent-set}, we see that $\pmb x'$ must contain $n$ vertices from the set $V_1$. 
As $\pmb x' \in \X$, we have that $\pmb x'$ contains either vertex $x_i$ or vertex $\overline{x}_i$ for all $i=1,\dots,n$. 
So we can define a corresponding variable assignment $g_1 : X \rightarrow \set{0,1}$. 
Using a similar reasoning as in Claim 1, we see that this variable assignment shows that $\varphi$ is a Yes-instance of {\radj}. 
This was to prove. Claim 1 and Claim 2 together complete the reduction, so we have shown that the robust two-stage independent set problem with uncertainty set $\cU_\Gamma$ is $\Sigma_3^p$-complete.
\end{proof}

\subsection{Robust Recoverable Independent Set}
\label{subsec:rec-ind-set}

The goal in this subsection is to show that the recoverable independent set problem is $\Sigma_3^p$-complete. 
The proof is very similar to the two-stage adjustable case. 
The robust recoverable independent set problem is described as follows: 
We are given a graph $G = (V,E)$ and first-stage costs $C_v \geq 0$ and second-stage cost bounds $\underline{c}_v \geq \overline{c}_v \geq 0$ for every vertex $v \in V$. 
We are also given an integer $\Gamma \geq 0$ denoting the budget and an integer $k \geq 0$ denoting the recoverability parameter. 
The task is to find the value 
\begin{equation}
 \textsc{Rob} = \max_{\pmb x  \in \X} \min_{\pmb c \in \cU_\Gamma } \max_{\pmb y \in \X(\pmb x) } \pmb C \pmb x + \pmb c \pmb y, \label{eq:recoverable-ind-set}
 \end{equation}
where $\X$ denotes the set of binary indicator vectors of independent sets, $\cU_\Gamma$ denotes the same discrete budgeted uncertainty set as in the previous subsection, and $\X(\pmb x)$ denotes the set of recovery solutions for $\pmb x$, that is $\X(\pmb x) = \set{\pmb y \in \X : \sum_i |x_i - y_i| \leq k}$.

\begin{theorem}
\label{thm:recoverable-ind-set}
Robust recoverable independent set with discrete budgeted uncertainty is $\Sigma_3^p$-complete.
\end{theorem}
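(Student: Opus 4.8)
The plan is to mimic the proof of \cref{thm:discr-two-stage-ind-set} as closely as possible, reusing the classical SAT-to-independent-set reduction and essentially the cost table \cref{table:independent-set}; the role previously played by the constraint $\pmb x+\pmb y\in\X$ (which, in the two-stage setting, locks in the first-stage literal choices for the $X$-variables) is now taken over by the recoverability bound $k$. Membership in $\Sigma_3^p$ is immediate from the $\exists\,\pmb x\,\forall\,\pmb c\,\exists\,\pmb y$ shape of \eqref{eq:recoverable-ind-set}, so it remains to prove hardness, again by reduction from {\radj} (which is $\Sigma_3^p$-complete by \cref{thm:gamma-SAT}). Given $(\varphi,X,Y,Z,\Gamma)$ with $|X|=|Y|=|Z|=n$ and $\ell$ clauses, I would take the graph $G(\varphi)$, set $V_1=\fromto{x_1}{x_n}\cup\fromto{\overline x_1}{\overline x_n}$ and $V_2=\fromto{y_1}{y_n}$, assign costs as in \cref{table:independent-set}, and put $\Gamma':=\Gamma$. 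In addition -- and this is the one new ingredient -- add a vertex $w$ adjacent to every vertex outside $V_1$, with a sufficiently large first-stage cost $C_w:=M$ and $\underline c_w=\overline c_w:=0$. Finally set the recoverability parameter $k:=2n+\ell+1$, and claim that $\textsc{Rob}\ge M+3n+\ell$ iff the {\radj}-instance is a Yes-instance.

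The forward direction mirrors Claim~1 of \cref{thm:discr-two-stage-ind-set}: from a witnessing $X$-assignment $g_1$ for {\radj} I take $\pmb x$ to be $\{w\}$ together with the $n$ literal vertices chosen by $g_1$ (first-stage value $M+n$); for an attack, which without loss of generality only lowers costs on a set $Y'\subseteq V_2$ with $|Y'|\le\Gamma$, I let $\pmb y$ be the size-$(3n+\ell)$ independent set of $G(\varphi)$ encoding the {\radj}-completion of $g_1$ associated with $Y'$. Then $w\notin\pmb y$, $\pmb y$ agrees with $\pmb x$ on $V_1$, so $|\pmb x\triangle\pmb y|=1+(2n+\ell)=k$; and since the $y_i$ attacked by $Y'$ are exactly the ones set to false by the completion, $\pmb c\pmb y=2n+\ell$, giving total value $M+3n+\ell$.

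The reverse direction mirrors Claim~2. If $\pmb x$ attains $\textsc{Rob}\ge M+3n+\ell$, then, since $\pmb C\pmb x\le M+n$ and $\pmb c\pmb y\le 2n+\ell$ always, both inequalities must be tight for every attack. Tightness of $\pmb C\pmb x$ forces $w\in\pmb x$ and $|\pmb x\cap V_1|=n$; as $w$ dominates everything outside $V_1$, this means $\pmb x$ consists of $w$ plus exactly one literal per $X$-gadget, defining an assignment $g_1$ and containing no $Y$-, $Z$- or clause vertex. For the attack zeroing the $y_i$ of a chosen $Y'$, tightness of $\pmb c\pmb y$ forces $\pmb y$ to avoid $w$, to pick one positive-weight literal from every $Y$- and $Z$-gadget and one vertex from every clause triangle, hence to encode a satisfying assignment $h$ of $\varphi$ with the $Y'$-variables false; and since $\pmb x\triangle\pmb y$ already contains $w$ together with the $2n+\ell$ such vertices of $\pmb y$ -- i.e.\ all of the $k$ allowed moves -- there is no slack left for $\pmb y$ to disagree with $\pmb x$ on any $X$-gadget, so $h|_X=g_1$. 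Thus $g_1$ witnesses that the {\radj}-instance is a Yes-instance.

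The main obstacle, and the reason this is not a verbatim copy of the two-stage argument, is the twofold constraint on $k$: it must be large enough that after the attack the recovery can realize \emph{any} completion of $g_1$ (which forces $k\ge 2n+\ell$), yet small enough that the recovery cannot disturb the committed $X$-literals. Without the vertex $w$, a first-stage solution could hedge by additionally grabbing a couple of $Z$- or clause vertices, and under the tight budget this frees up precisely enough moves to flip an $X$-literal during recovery, which would collapse the correspondence with {\radj} (the first stage would effectively commit to more than one $X$-assignment). The vertex $w$ closes this loophole by forcing every first-stage solution of maximal first-stage value to contain nothing outside $V_1\cup\{w\}$. Verifying that $k=2n+\ell+1$ meets both demands simultaneously, and that the adversary's choices translate exactly into {\radj}-adversary moves, is the delicate part; the satisfying-assignment/independent-set dictionary itself is identical to the one in the classical reduction, so the rest is routine bookkeeping.
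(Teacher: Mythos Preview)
Your argument is correct, and it takes a genuinely different route from the paper's. The paper does not add an anchor vertex; instead it ``blows up'' each $X$-variable gadget of $G(\varphi)$ into a complete bipartite graph $K_{N_0+1,N_0+1}$ (where $N_0=|V(G(\varphi))|$), so that flipping the choice on any single $X$-gadget already costs at least $N_0+1$ recovery moves, while the entire remainder of the graph has at most $N_0$ vertices. With $k=2N_0$ one can then freely rearrange all $Y$-, $Z$-, and clause vertices, yet no $X$-choice can move. Your construction achieves the same locking effect by a different, tighter counting: the large first-stage weight on $w$ forces $\pmb x=\{w\}\cup L_X$, and since dropping $w$ and inserting the $2n+\ell$ weight-$1$ picks in $\pmb y$ already uses all $k=2n+\ell+1$ moves, there is no slack to disturb any $X$-literal. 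Both approaches are valid; yours is more economical (one extra vertex versus a quadratic blow-up of the instance), while the paper's keeps all costs in $\{0,1\}$ and avoids the auxiliary large weight $M$. The underlying dictionary between satisfying assignments and size-$(3n+\ell)$ independent sets, and the interpretation of the adversary's budget as the set $Y'$, are identical in the two proofs.
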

\begin{proof}
The containment to the class $\Sigma_3^p$ follows from the definition, so it remains to show hardness. 
Let an instance $(\varphi, X, Y, Z)$ of {\radj} be given and let $G_0$ be the graph used in the proof of \cref{thm:discr-two-stage-ind-set}, that is, the graph from \cref{fig:max-clique-reduction}. 
Let $N_0$ be the number of vertices of $G_0$. 
We construct a new graph $G_1$ from $G_0$ by "blowing up" all the variable gadgets corresponding to $X$, in the following way: For each $i=1,\dots,n$ we delete the variable gadget for $x_i$ consisting out of vertices $x_i$ and $\overline{x_i}$ and replace it by a complete bipartite graph with $2N_0 + 2$ vertices.
 This complete bipartite graph has the $N_0+1$ vertices $x_i^{(0)},\dots,x_i^{(N_0)}$ on the left side and the $N_0+1$ vertices $\overline{x_i}^{(0)},\dots,\overline{x_i}^{(N_0)}$ on the right side of its bipartition. 
 Whenever there is an edge in the old graph $G_0$ from some vertex $v$ to $x_i$, the new graph $G_1$ contains all the edges from $v$ to $x_i^{(0)},\dots,x_i^{(N_0)}$. 
 Likewise, if there is an edge in $G_0$ from $v$ to $\overline x_i$ in the old graph, the new graph $G_1$ contains all the edges from $v$ to $\overline{x_i}^{(0)},\dots,\overline{x_i}^{(N_0)}$. 
 This completes the description of the graph $G_1$. 
 We now observe that every maximal independent set either contains all the vertices $x_i^{(0)},\dots,x_i^{(N_0)}$, or all the vertices $\overline{x_i}^{(0)},\dots,\overline{x_i}^{(N_0)}$ for each $i=1,\dots,n$. 
 We let $V_1$ be the set of all vertices of all these complete bipartite graphs, that is $V_1 = \bigcup_{i=1}^n \fromto{x_i^{(0)}}{x_i^{(N_0)}} \cup \fromto{\overline{x_i}^{(0)}}{\overline{x_i}^{(N_0)}}$. Note that $|V_1| = n(N_0+1)$. Furthermore, we let $V_2 = \fromto{y_1}{y_n}$, as in the proof of \cref{thm:discr-two-stage-ind-set}.
 We make the following claim:

\textbf{Claim:} Let $I, I'$ be two maximal independent sets in $G_1$. Then we have $|I \cap I'| \geq N_1 - N_0$ if and only if $I \cap V_1 = I' \cap V_1$.

\emph{Proof of the claim.} If the maximal independent sets $I$ and $I'$ do not agree on $V_1$, then they do not agree on at least $N_0 + 1$ vertices since they are maximal independent sets and all the "blow-up" gadgets are bipartite graphs with $N_0+1$ vertices on each side. 
On the other hand, if $I$ and $I'$ agree on $V_1$, then we have $|I \cap I'| \geq N_1 - N_0$, because the set of remaining vertices in $G_1$ without the set $V_1$ has size at most $N_0$.

Using this claim, it is straight-forward to extend the proof of \cref{thm:discr-two-stage-ind-set} to the recoverable case.
 Namely, given a {\radj} instance $(\varphi,X,Y,Z,\Gamma)$, we define an instance of the recoverable independent set problem with discrete budgeted uncertainty the following way: 
 The graph is $G_1$, the vertex costs stay the same as in \cref{table:independent-set}, the recoverability parameter is $k := 2N_0$, and the attacker's budget is $\Gamma' = \Gamma$. 
 We claim that $\varphi$ is a Yes-instance of {\radj} if and only if $\textsc{Rob} \geq 2n + n(N_0+1) + \ell$ in \cref{eq:recoverable-ind-set}.
Indeed, let $\pmb x'$ and $\pmb y'$ be two binary vectors such that $\pmb y' \in \X(\pmb x')$ is a recovery solution of $\pmb x'$ with respect to the recoverability parameter $k = 2N_0$ and such that $\pmb C \pmb x' + \pmb c \pmb y' \geq 2n + n(N_0+1) + \ell$. 
It follows from this inequality that $\pmb x' + \pmb y'$ is an indicator vector of a maximal independent set. 
Because $\pmb y'$ is a recovery solution of $\pmb x'$ we have $\sum_i |y'_i - x'_i| \leq 2N_0$. So the hamming distance of these vectors is at most $2N_0$, which means that the corresponding independent sets must have at least $N_1 - N_0$ vertices in common.
So by the claim we have that $\pmb x'$ and $\pmb y'$ agree on the vertex set $V_1$. 
The rest of the proof is analogous to the proof of \cref{thm:discr-two-stage-ind-set}.
 Informally speaking, we see that the vector $\pmb x'$ completely determines the variable assignment on the set $X$, while the vector $\pmb y'$ must agree with $\pmb x'$ on the variables $X$ but can have differing assignments of the variables $Y \cup Z$.
\end{proof}

\subsection{Robust Two-Stage Traveling Salesman Problem}
\label{subsec:two-stage-tsp}

The robust two-stage traveling salesman problem (TSP) is the robust problem of choosing a TSP tour in two stages: 
First choose a partial tour $T_1$ in the first stage, then after the reveal of the uncertain cost function $c$ choose the remainder $T_2$ of the tour. 
We wish to minimize the total cost $c(T_1 \cup T_2)$ in the worst case. 
The problem is formally defined as

\begin{equation*}
\textsc{Rob} = \min_{\pmb x \in \set{0,1}^E} \max_{\pmb c \in \cU_\Gamma } \min_{\pmb x \in \X(\pmb x)} \pmb C \pmb x + \pmb c \pmb y. 
\end{equation*} 

Here the input graph $G = (V,E)$ is a complete undirected graph, that is, $E = {V \choose 2}$.
Furthermore, $C_e \geq 0$ denotes the first-stage costs for all $e \in E$, and $\X$ denotes the set of all binary indicator vectors in $\set{0,1}^E$ such that the set of those edges with $x_e=1$ is a Hamilton cycle.
 Furthermore, $\X(\pmb x) = \set{y \in \set{0,1}^V \mid \pmb x + \pmb y \in \X}$ denotes the set of all second-stage solutions $\pmb y$ such that $\pmb y$ together with $\pmb x$ forms a Hamilton cycle. 
 To treat the case that a first-stage vector $\pmb x$ is selected which can not be completed to a Hamilton cycle, i.e. $\X(\pmb x) = \emptyset$ we define $\min \emptyset = \infty$. 
 This means the solution has the objective value $\infty$ in that case.

Finally, for given constants $\underline{c}_i, d_i \geq 0$ for all $i \in V$ and some integer $\Gamma \geq 0$, the set $\cU_\Gamma$ of uncertain cost functions is defined as
\[
\cU_\Gamma = \set{ \pmb c \in \R^V : c_i = \underline{c}_i + \delta_id_i, \delta_i \in \set{0,1}\ \forall i \in V,\ \sum_{i \in V}\delta_i \leq \Gamma }.
\]
In other words, $\cU_\Gamma$ contains those cost functions where at most $\Gamma$ entries deviate from their nominal cost $\underline{c}_i$. We define $\overline{c}_i = \underline{c}_i + d_i$. 

\begin{theorem}
\label{thm:discr-two-stage-tsp}
Robust two-stage TSP with discrete budgeted uncertainty is $\Sigma_3^p$-complete.
\end{theorem}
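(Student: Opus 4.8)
The plan is to follow the same template as the independent set proof (Theorem~\ref{thm:discr-two-stage-ind-set}): start from the classical reduction of 3SAT to Hamiltonian cycle (say, the standard one built from variable gadgets realized as "XOR chains" of edges and clause gadgets realized by the usual three-vertex clause gadgets connected to the variable chains), and modify it to reduce from {\radj} instead. Membership in $\Sigma_3^p$ is immediate from the definition. For hardness, given a {\radj} instance $(\varphi, X, Y, Z, \Gamma)$ with $|X|=|Y|=|Z|=n$ and each clause having three literals, I would build a complete graph $G$ on the vertex set of the classical Hamiltonian-cycle gadget graph $H(\varphi)$, assign cost $0$ (or a small cost) to the "real" gadget edges of $H(\varphi)$ and a large cost $M$ to all other edges of the complete graph, so that any cheap Hamilton cycle must use only gadget edges and hence corresponds to a satisfying assignment of $\varphi$; the large-$M$ edges ensure that a first-stage choice that cannot be completed to a genuine gadget tour is punished.

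The key design step is the cost assignment analogous to Table~\ref{table:independent-set}. I would designate a set $E_1$ of edges encoding the truth values of the $X$-variables (the "first half" of each $X$-variable chain in the gadget) and force them to be beneficial only when bought in the first stage, and a set $E_2$ of edges encoding the $Y$-variables on which the adversary may raise costs. Concretely: edges in $E_1$ get first-stage cost $0$ and nominal/deviated second-stage cost both large; edges encoding the complementary choice get symmetric treatment so that the first stage must commit to an $X$-assignment; edges in $E_2$ get nominal second-stage cost $0$ and deviated cost large, so that the adversary setting $\delta_e=1$ forces the tour to take the "other branch" of that $Y$-chain, i.e.\ forces $y_e$ to false; all remaining gadget edges get second-stage cost $0$ with no deviation; all non-gadget edges get cost $M$ in every stage. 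Set $\Gamma' := \Gamma$. Then one shows $\textsc{Rob} \le$ (the target value, the total mandatory gadget cost) if and only if $\varphi$ is a Yes-instance, by the same two-claim structure as before: a satisfying first-stage $X$-assignment yields a first-stage partial tour that, for every adversarial attack of $\le \Gamma$ edges in $E_2$, can be completed to a cheap Hamilton cycle via a satisfying completion of $(Y\setminus Y')\cup Z$; conversely any first-stage solution achieving the target must commit to an $X$-assignment (else it pays $M$ somewhere), and this assignment witnesses that $\varphi$ is Yes.

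The main obstacle is a genuine structural one, not present in the independent set case: in the Hamiltonian-cycle gadget the "truth-setting" of a variable is encoded by which direction a chain is traversed, so forcing a single $Y$-variable to false by penalizing one edge must be done carefully — raising the cost of one edge of a chain must force the entire chain onto its complementary traversal without creating a sub-tour or an unavoidable detour through an $M$-edge, and it must do so whether or not that chain also carries clause connections. I would handle this by using a chain representation in which each variable is a long cycle-like gadget with two edge-disjoint traversal modes ("true" and "false"), placing all of the $Y$-variable's clause attachments on structure that is compatible with the forced mode, and choosing $M$ large enough (e.g.\ $M > $ total number of edges) that no cheap tour can ever use an $M$-edge. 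A secondary technical point is ensuring that first-stage edges and second-stage edges partition cleanly so that $\pmb x + \pmb y \in \X$ exactly captures "glue a committed $X$-assignment to a completion", which is handled by the $E_1$/complement cost split exactly as in Table~\ref{table:independent-set}. Once these gadget details are fixed, the equivalence proof is routine and mirrors Claims 1 and 2 of Theorem~\ref{thm:discr-two-stage-ind-set}.
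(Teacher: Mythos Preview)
Your proposal is correct and follows essentially the same template as the paper: modify the classical 3SAT-to-Hamiltonian-cycle reduction to start from {\radj}, use a cost structure that forces $X$-variable edges to be chosen in the first stage and lets the adversary penalize only the $Y$-variable edges, and argue via the same two-claim equivalence.

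The one noteworthy difference is your choice of underlying Hamiltonian gadget. You anticipate a construction in which a variable's truth value is encoded by the \emph{direction} a chain is traversed, and you correctly flag that penalizing a single edge to force a direction is delicate in such a gadget. The paper sidesteps this entirely by using the simpler parallel-edge variable gadget: each variable is just two parallel edges $x_i$ and $\overline{x}_i$ (with XOR-gadgets connecting literal edges to clause triangles), so penalizing the single edge $y_i$ directly forces the tour onto $\overline{y}_i$ with no further structural argument needed. With this gadget the cost table is exactly analogous to the independent set case (edges in $E_1$ get $C_e=0$, $\underline{c}_e=\overline{c}_e=1$; edges in $E_2$ get $C_e=1$, $\underline{c}_e=0$, $\overline{c}_e=1$; remaining gadget edges get $C_e=1$, $\underline{c}_e=\overline{c}_e=0$; non-gadget edges get cost $1$ everywhere), and the target is simply $\textsc{Rob}=0$ rather than a threshold involving a large $M$. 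Your approach would work, but the paper's gadget choice eliminates precisely the obstacle you spend most of your proposal worrying about.
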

\begin{proof}
It follows directly form the definition that the problem is contained in the class $\Sigma_3^p$. So it remains to prove $\Sigma_3^p$-hardness. 
The starting point is the folklore reduction of 3SAT to the Hamilton cycle problem, which we only sketch here. An example is depicted in \cref{fig:hamilton-classic-reduction}. 
A \emph{variable-choice gadget} consists out of two parallel edges $x_i$ and $\overline x_i$. 
For each variable in the 3SAT instance there is a variable-choice gadget and these gadgets are all connected in a long chain. 
Furthermore, there are multiple \emph{XOR-gadgets}. An XOR-gadget between two edges $\set{a,b}$ and $\set{a',b'}$ has the effect that in every Hamilton cycle, one and only one of these two edges must be used.
For each clause $x_1 \lor x_2 \lor x_3$ in the 3SAT instance, there is a \emph{clause gadget}, which consists out of a triangle and three XOR-gadgets connecting the three edges of the triangle to the literals of that clause. 
Finally, we let the vertex set $S$ consist out of all vertices of the clause gadgets plus a vertex at the beginning and at the end of the chain of variable gadgets (marked with a square in \cref{fig:hamilton-classic-reduction}). 
All vertices of $S$ are connected in a big clique.
We claim that the described graph has a Hamilton cycle if and only if the original 3SAT formula is satisfiable. Indeed, every Hamilton cycle must choose either edge $x_i$ or $\overline x_i$ from each variable gadget. 
Suppose this choice is done in such a way that some clause is not satisfied (for example, we may choose $\overline x_1, \overline{x}_2, x_3$ in \cref{fig:hamilton-classic-reduction}). 
Then in the corresponding clause gadget, we have to choose all three edges. This is a contradiction, because a Hamilton cycle cannot contain a smaller cycle of length 3.
 
 Extending this argument, it can be seen that the graph has a Hamilton cycle if and only if the 3SAT formula is satisfiable. For every 3SAT formula $\psi$, let $G(\psi)$ denote the corresponding graph we just described. 
\begin{figure}[thpb]
\centering
\raisebox{-0.5\height}{
\begin{tikzpicture}[scale=1,auto]
\node[vertex,label=left:$a$] (x1) at (-0.5,1) {};
\node[vertex] (x2) at (0.5,1){};
\node[vertex] (x3) at (1,1){};
\node[vertex] (x4) at (1.5,1){};
\node[vertex] (x5) at (2,1){};
\node[vertex,label=right:$b$] (x6) at (3,1){};
\node[vertex] (y1) at (0.5,0.5){};
\node[vertex] (y2) at (1,0.5){};
\node[vertex] (y3) at (1.5,0.5){};
\node[vertex] (y4) at (2,0.5){};
\node[vertex,label=left:$a'$] (z1) at (-0.5,0){};
\node[vertex] (z2) at (0.5,0){};
\node[vertex] (z3) at (1,0){};
\node[vertex] (z4) at (1.5,0){};
\node[vertex] (z5) at (2,0){};
\node[vertex,label=right:$b'$] (z6) at (3,0){};
\draw[edge] (x1) -- (x2) -- (x3) -- (x4) -- (x5) -- (x6);
\draw[edge] (z1) -- (z2) -- (z3) -- (z4) -- (z5) -- (z6);
\draw[edge] (x2) -- (y1) -- (z2);
\draw[edge] (x3) -- (y2) -- (z3);
\draw[edge] (x4) -- (y3) -- (z4);
\draw[edge] (x5) -- (y4) -- (z5);
\node[vertex,label=left:$a$] (v1) at (-0.5,-2) {};
\node[vertex,label=right:$b$] (v2) at (3,-2){};
\node[vertex,label=left:$a'$] (w1) at (-0.5,-3){};
\node[vertex,label=right:$b'$] (w2) at (3,-3){};
\draw[edge] (v1) to (v2);
\draw[edge] (w1) to (w2);
\draw[XOREdge] ($(v1)!0.5!(v2)$) to coordinate (cross1) ($(w1)!0.5!(w2)$);
\node[XORGadget] at (cross1) {}; 
\end{tikzpicture}
}
\raisebox{-0.5\height}{
\begin{tikzpicture}[scale=1,auto]
\node[vertex] (s2) at (4,6.5) {};
\node[vertex] (x1) at (4,5.5) {};
\node[vertex] (x2) at (4,4) {};
\node[vertex] (x3) at (4,2.5) {};
\node[vertex] (s3) at (4,1) {};
\node[vertex] (s4) at (4,0) {};
\node[draw] at (s2) {};
\node[draw] at (s4) {};
\draw[edge, bend right] (x1) to coordinate (a1) coordinate[pos=0.25] (a1') (x2);
\node[left] at (a1) {$x_1$}; 
\draw[edge, bend left] (x1) to node[right]{$\overline x_1$} (x2);
\draw[edge, bend right] (x2) to coordinate (a2) coordinate[pos=0.25] (a2') (x3);
\node[left] at (a2) {$x_2$};
\draw[edge, bend left] (x2) to node[right]{$\overline x_2$} (x3);
\draw[edge, bend left] (x3) to coordinate (a3) coordinate[pos=0.75] (a3') (s3);
\node[right] at (a3) {$\overline x_3$};
\draw[edge, bend right] (x3) to node[left]{$x_3$} (s3);
\draw[edge] (s2) to (x1);
\draw[edge] (s3) to (s4);
\node[vertex] (c1) at (1.5,4) {};
\node[vertex] (c2) at (0.5,2.5) {};
\node[vertex] (c3) at (2.5,2.5) {};
\draw[edge] (c1) -- (c2) -- (c3) -- (c1);
\node[draw] at (c1) {};
\node[draw] at (c2) {};
\node[draw] at (c3) {};
\coordinate (cross1) at (1.5,5){}; 
\draw[XOREdge, bend right] (a1') to (cross1) to ($(c1)!0.5!(c2)$);
\node[XORGadget]  at (cross1) {}; 
\draw[XOREdge, bend right] (a2') to coordinate (cross2) ($(c1)!0.5!(c3)$);
\node[XORGadget] at (cross2) {}; 
\draw[XOREdge, bend left] (a3') to coordinate (cross3) ($(c2)!0.5!(c3)$);
\node[XORGadget] at (cross3) {}; 
\node[below=10] at (c2) {$x_1 \lor x_2 \lor \overline x_3$};
\end{tikzpicture}
}
\caption{Classical reduction of 3SAT to the Hamilton cycle problem. Arrows marked with a cross denote an XOR-gadget, as illustrated on the left. Vertices marked with a square are all connected in one clique.}
\label{fig:hamilton-classic-reduction}
\end{figure}
\begin{figure}[thpb]
\centering
\begin{tikzpicture}[scale=1,auto]
\node[vertex] (s2) at (4,6) {};
\node[vertex] (x1) at (4,5) {};
\node[vertex] (x2) at (4,4) {};
\node[vertex] (x3) at (4,3) {};
\node[vertex] (x4) at (4,2) {};
\node[vertex] (x5) at (4,1) {};
\node[vertex] (x6) at (4,0) {};
\node[vertex] (x7) at (4,-1) {};
\node[vertex] (s3) at (4,-2) {};
\node[draw] at (s2) {};
\node[draw] at (s3) {};
\draw[edge, bend right] (x1) to coordinate (a1) coordinate[pos=0.25] (a1') (x2);
\node[left] at (a1) {$x_1$}; 
\draw[edge, bend left] (x1) to node[right]{$\overline x_1$} (x2);
\draw[edge, bend right] (x2) to coordinate (a2) coordinate[pos=0.25] (a2') (x3);
\node[left] at (a2) {$x_n$};
\draw[edge, bend left] (x2) to node[right]{$\overline x_n$} (x3);
\draw[edge, bend left] (x3) to coordinate (a3) coordinate[pos=0.75] (a3') (x4);
\node[right] at (a3) {$\overline y_1$};
\draw[edge, bend right] (x3) to node[left]{$y_1$} (x4);

\draw[edge, bend right] (x4) to coordinate (a4) (x5);
\node[left] at (a4) {$y_n$}; 
\draw[edge, bend left] (x4) to node[right]{$\overline y_n$} (x5);

\draw[edge, bend right] (x5) to coordinate (a5) (x6);
\node[left] at (a5) {$z_1$}; 
\draw[edge, bend left] (x5) to node[right]{$\overline z_1$} (x6);

\draw[edge, bend right] (x6) to coordinate (a6) (x7);
\node[left] at (a6) {$z_n$}; 
\draw[edge, bend left] (x6) to node[right]{$\overline z_n$} (x7);

\node[right=12, above=-8] at (x2) {$\vdots$};
\node[left=15, above=-8] at (x2) {$\vdots$};
\node[right=12, above=-8] at (x4) {$\vdots$};
\node[left=15, above=-8] at (x4) {$\vdots$};
\node[right=12, above=-8] at (x6) {$\vdots$};
\node[left=15, above=-8] at (x6) {$\vdots$};
\draw[edge] (s2) to (x1);
\draw[edge] (s3) to (x7);

\draw[dashed,rounded corners] ($(x1)+(-1,0)$) rectangle ($(x3) + (1,0)$);
\node at ($(x2) + (-1.5,0)$) {$E_1$};
\draw[dashed,rounded corners] ($(x3)+(0,-.2)$) rectangle ($(x5) + (-1,0.2)$);
\node at ($(x4) + (-1.5,0)$) {$E_2$};

\node[vertex] (c1) at (-1,3) {};
\node[vertex] (c2) at (-2,1.5) {};
\node[vertex] (c3) at (0,1.5) {};
\draw[edge] (c1) -- (c2) -- (c3) -- (c1);
\node[draw] at (c1) {};
\node[draw] at (c2) {};
\node[draw] at (c3) {};
\node at ($(c2)!0.5!(c3) + (0,-2)$) {$\vdots$};
\coordinate (cross1) at (-1,4){}; 
\draw[edge,-{Stealth}, bend right] ($(x2) + (-3,0.5)$) to (cross1) to ($(c1)!0.5!(c2)$);
\node[XORGadget]  at (cross1) {}; 
\draw[edge,-{Stealth}, bend right] (2,3) to coordinate (cross2) ($(c1)!0.5!(c3)$);
\node[XORGadget] at (cross2) {}; 
\draw[edge,-{Stealth}, bend left] (1.5,0.5) to coordinate (cross3) ($(c2)!0.5!(c3)$);
\node[XORGadget] at (cross3) {}; 
\end{tikzpicture}
\caption{Reduction from {\radj} to the robust TSP problem.}
\label{fig:robust-tsp}
\end{figure}

In order to prove the $\Sigma_3^p$-hardness of robust two-stage TSP, we modify this reduction to work with {\radj} instead of SAT. 
Let an instance of {\radj} be given by a formula $\varphi$ in conjunctive normal form on the variable set $X \cup Y \cup Z$, and a parameter $\Gamma \geq 0$. 
Let $n = |X| = |Y| = |Z|$ be the number of variables in each set. 
By \cref{thm:gamma-SAT}, this problem is $\Sigma_3^p$-complete. 
We construct an instance of robust TSP, consisting out of a complete graph $G' = (V',E')$ with $E' = {V \choose 2}$ and edge costs $C_e, \underline{c}_e, \overline{c}_e$ for all $e \in E'$ and a parameter $\Gamma'$. 
The reduction is sketched in \cref{fig:robust-tsp}. Formally, it is described the following way:
We consider the graph $G := G(\varphi) = (V,E)$. Our TSP instance has the same vertex set $V' = V$, and all possible edges $E' = {V \choose 2}$, such that any edge $e \in E' \setminus E$ has a non-zero cost $C_e = \underline{c}_e = \overline{c}_e = 1$. 
The idea is that we will find a robust TSP tour of cost 0 if and only if $\varphi$ is a Yes-instance. A 0-cost tour will only use edges from $E$, not from $E'$.
 Furthermore, for the edges in $E$, we define the edge sets $E_1 := \fromto{x_1}{x_n} \cup \fromto{\overline{x}_1}{\overline{x}_n}$ and $E_2 = \fromto{y_1}{y_n}$.
Depending on whether an edge is in $E_1$, in $E_2$, in $E \setminus (E_1 \cup E_2)$ or none of them, we define its costs $C_e, \underline{c}_e, \overline{c}_e$ as specified in \cref{table:tsp}. 
We remark that these costs have the following properties: First, edges in $E_1$ are only beneficial to pick, if they are picked in the first stage. 
Secondly, the only edges, where the adversary can increase the uncertain costs are the edges in $E_2$.
Finally, we let $\Gamma' := \Gamma$. This completes our description of the robust TSP instance.
Let $\textsc{Rob}$ be the value of the robust two-stage TSP, that is
\begin{equation}
\textsc{Rob} = \min_{\pmb x' \in \set{0,1}^V} \max_{\pmb c \in \cU_{\Gamma'} } \min_{\pmb y' \in \X(\pmb x')} \pmb C \pmb x' + \pmb c \pmb y'. \label{eq:two-stage-tsp}
\end{equation} 
We claim that $\textsc{Rob} = 0$ if and only if the given {\radj}-instance is a Yes-instance.
\begin{table}
\centering
\begin{tabular}{l|rrr}
& $C_e$ & $\underline{c}_e$ & $\overline{c}_e$ \\
\hline
$e \in E_1$ & 0 & 1 & 1 \\
$e \in E_2$ & 1 & 0 & 1 \\
$e \in E \setminus (E_1 \cup E_2)$ & 1 & 0 & 0 \\
$e \in E' \setminus E$ & 1 & 1 & 1
\end{tabular}
\caption{Costs assigned to the edges in the robust TSP instance}
\label{table:tsp}
\end{table}

\textbf{Claim 1:} If $\varphi$ is a Yes-instance of {\radj}, then $\textsc{Rob} = 0$. 

\emph{Proof of the claim.} Indeed, in this case there is an assignment $g_1$ of $X$-variables such that for all sets $Y' \subseteq Y$ of size at most $\Gamma$, there is an assignment $g_2$ of the variables in $Y \cup Z$ such that $Y'$ is assigned '$0$' and $\varphi$ is satisfied. 
Let $\pmb x' \in \set{0,1}^E$ be the first-stage solution, which picks only edges from the set $E_1$, and in $E_1$ picks exactly those edges which correspond to the assignment $g_X$. Formally, from the two edges $x_i$ and $\overline{x}_i$, the binary vector $\pmb x'$ includes exactly the edge which is true under that assignment $g_1$. 
We claim that using this first-stage solution $\pmb x'$, \cref{eq:two-stage-tsp} evaluates to $0$. 
Indeed, let $E_2'$ be the set of vertices, where the adversary increases the uncertain costs in the second stage. 
Due to the structure of the  edge costs, the only place where costs can be increased are edges in $E_2$. Therefore, we can w.l.o.g.\ assume that $E_2' \subseteq E_2 = \fromto{y_1}{y_n}$. 
By the definition of $\cU_{\Gamma'}$, we have $|E'_2| \leq \Gamma' = \Gamma$. 
We now interpret $E'_2$ as a set of variables, which are forced to be '0' by the adversary.
Because $\varphi$ is a Yes-instance of {\radj}, we can find a second-stage vector $\pmb y' \in \set{0,1}^V$, such that in $\pmb x' + \pmb y'$ there is one edge from each of the $3n$ variable gadgets and such that $\pmb x' + \pmb y'$ describes a Hamilton cycle, and such that $\pmb x' + \pmb y'$ does not include any edge whose cost has been increased by the adversary (i.e\ it includes no edges from $V'_2$). 
In total, for each $\pmb c \in \cU_\Gamma$, there exists a second-stage solution $\pmb y'$ such that $\pmb C \pmb x' + \pmb c \pmb y' = 0$. 
Therefore $\textsc{Rob} = 0.$

\textbf{Claim 2:} If $\textsc{Rob} = 0$, then $\varphi$ is a Yes-instance of {\radj}. 

\emph{Proof of the claim.} 
In this case, there is a first-stage solution $\pmb x' \in \set{0,1}^E$, such that for all $\pmb c \in \cU_\Gamma$ there exists a second-stage solution $\pmb y' \in \set{0,1}^E$, 
such that $\pmb x'  + \pmb y'$ is indicator vector of a Hamilton cycle and $\pmb C\pmb x' + \pmb c\pmb y' = 0$. 
Then it follows from the structure of the edge costs, that $\pmb x'$ contains only edges in $E_1$, while $\pmb y'$ contains no edges from $E_1$. Because together they form a Hamilton cycle, we have that $\pmb x'$ contains exactly one edge from each $X$-variable gadget and $\pmb y'$ contains exactly one edge from each $Y$- and $Z$-variable gadget, avoiding those edges whose costs were increased by the adversary.
So we can define a variable assignment $g_1 : X \rightarrow \set{0,1}$ which corresponds exactly to $\pmb x'$. 
Using a similar reasoning as in Claim 1, we see that this variable assignment shows that $\varphi$ is a Yes-instance of {\radj}. 
This was to prove. Claim 1 and Claim 2 together complete the reduction, so we have shown that the robust two-stage independent set problem with uncertainty set $\cU_\Gamma$ is $\Sigma_3^p$-complete.
\end{proof}

\subsection{Robust Recoverable TSP}
\label{subsec:rec-tsp}

The goal in this subsection is to show that the robust recoverable TSP with discrete budgeted uncertainty is $\Sigma_3^p$-complete. 
The proof is very similar to the two-stage adjustable case. 
The robust recoverable TSP is described as follows: 
We are given a complete graph $G = (V,E)$ and first-stage costs $C_e \geq 0$ and second-stage cost bounds $0 \leq \underline{c}_e \leq \overline{c}_e$ for every edge $e \in E$. 
We are also given an integer $\Gamma \geq 0$ denoting the budget and an integer $k \geq 0$ denoting the recoverability parameter. 
The second-stage cost bounds together with $\Gamma$ define the uncertainty set $\cU_\Gamma$ as defined in the previous subsection.
The task is to find the value 
\begin{equation}
 \textsc{Rob} = \min_{\pmb x  \in \X} \max_{\pmb c \in \cU_\Gamma } \min_{\pmb y \in \X(\pmb x) } \pmb C \pmb x + \pmb c \pmb y, \label{eq:recoverable-tsp}
 \end{equation}
where $\X$ denotes the set of binary indicator vectors of Hamilton cycles, $\cU_\Gamma$ denotes the discrete budgeted uncertainty set, and $\X(\pmb x)$ denotes the set of recovery solutions for $\pmb x$, that is $\X(\pmb x) = \set{\pmb y \in \X : \sum_i |x_i - y_i| \leq k}$.

\begin{theorem}
\label{thm:recoverable-tsp}
Robust recoverable TSP with discrete budgeted uncertainty is $\Sigma_3^p$-complete.
\end{theorem}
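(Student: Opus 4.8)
The proof will mirror the way \cref{thm:recoverable-ind-set} is obtained from \cref{thm:discr-two-stage-ind-set}. Containment in $\Sigma_3^p$ will be immediate from the $\exists\forall\exists$ quantifier structure of the problem: an instance with $\textsc{Rob}\le t$ is certified by a first-stage Hamilton cycle $\pmb{x}$ together with, for every scenario $\pmb{c}\in\cU_\Gamma$ (each of polynomial description size, given by a $0/1$ vector of weight $\le\Gamma$), a recovery Hamilton cycle $\pmb{y}$ within the recovery bound, and all these objects together with the value $\pmb C\pmb x+\pmb c\pmb y$ are polynomial-time checkable. For hardness I would reduce from {\radj}. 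The starting point is the two-stage reduction of \cref{thm:discr-two-stage-tsp}: from a {\radj}-instance $(\varphi,X,Y,Z,\Gamma)$ one builds the Hamilton-cycle graph $G(\varphi)$ with variable chains for $X$, $Y$, $Z$, the clause gadgets, and the completion by penalty edges $E'\setminus E$. In the two-stage setting the first stage commits only to the $X$-edges; in the recoverable setting the first stage is already a full Hamilton cycle, so the task is to force the recovery tour to keep the $X$-part of that tour unchanged while still being allowed to re-route the $Y$-, $Z$- and clause-parts freely.

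To achieve this I would ``blow up'' the $X$-variable gadgets, in the spirit of the complete-bipartite blow-up of \cref{thm:recoverable-ind-set}. Let $N_0$ be the number of vertices of $G(\varphi)$. Replace each $X$-variable gadget of $x_i$, a single pair of parallel edges $x_i,\overline{x}_i$ in the variable chain, by a chain of $N_0+1$ consecutive copies of that gadget, with true-edges $x_i^{(0)},\dots,x_i^{(N_0)}$ and false-edges $\overline{x}_i^{(0)},\dots,\overline{x}_i^{(N_0)}$, and insert XOR-gadgets between consecutive copies forcing all copies to make the same choice, so that a Hamilton cycle has exactly two ways to traverse the blown-up gadget: through all true-edges, or through all false-edges; the clause XOR-gadgets are attached as before (to the first copy). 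Call the resulting complete-graph instance $G'=(V',E')$, let $E$ be the edge set of the sparse graph, let $E_1$ be all edges lying inside blown-up $X$-gadgets, and keep $E_2=\fromto{y_1}{y_n}$ as in \cref{thm:discr-two-stage-tsp}. I would put $C_e=0$ for $e\in E$ and $C_e=1$ for $e\in E'\setminus E$; $\underline{c}_e=\overline{c}_e=0$ for $e\in E\setminus E_2$, $\underline{c}_e=0$, $\overline{c}_e=1$ for $e\in E_2$, and $\underline{c}_e=\overline{c}_e=1$ for $e\in E'\setminus E$; finally set $\Gamma':=\Gamma$ and the recoverability parameter $k:=2N_0$.

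The single structural fact the argument rests on is that two Hamilton cycles of the sparse graph disagreeing on the choice in some blown-up $X$-gadget have Hamming distance at least $2(N_0+1)>2N_0=k$, because the $N_0+1$ true-edges of one and the $N_0+1$ false-edges of the other all lie in the symmetric difference. Granting this, the equivalence $\textsc{Rob}=0\iff\varphi$ is a Yes-instance is proved just as in \cref{thm:discr-two-stage-tsp}. For the forward direction, start from the first-stage tour $\pmb{x}'$ that realises the $X$-assignment $g_1$ of the Yes-instance and, using the Yes-property for the adversary choice $\emptyset$, fixes a compatible full assignment of $Y\cup Z$; any attack forces a set $Y'$, $|Y'|\le\Gamma$, of $E_2$-edges to cost $1$, the Yes-property then yields a satisfying assignment of $(Y\setminus Y')\cup Z$ with $Y'$ false, and the corresponding recovery tour can be taken to coincide with $\pmb{x}'$ on the whole $X$-part and to change only edges outside all $X$-gadgets, a region with fewer than $N_0$ vertices and hence contributing fewer than $N_0$ edges to any tour, so the recovery tour lies within Hamming distance $k$ and has cost $0$. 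For the reverse direction, $\textsc{Rob}=0$ forces $\pmb{x}'$ to avoid all penalty edges and hence to encode a full assignment; the structural fact forces every cost-$0$ recovery tour to agree with $\pmb{x}'$ on $E_1$, i.e.\ on the $X$-variables, so $\pmb{x}'$ determines an $X$-assignment $g_1$, and for every adversarial $Y'$ a cost-$0$ recovery tour gives a satisfying assignment extending $g_1$ with $Y'$ false, so $\varphi$ is a Yes-instance.

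I expect the one genuine obstacle to be the design of the blow-up gadget and the verification of its properties. Unlike for independent set, one cannot enlarge a TSP variable gadget by subdividing edges: the new degree-two vertices would have to be visited on \emph{both} sides of the binary choice, which breaks the gadget. The chain-of-copies construction with XOR consistency gadgets avoids this, but one must check that the XOR-gadgets compose correctly (no edge is reused by two gadgets, and the composed gadget still admits exactly the two intended routings) and that the single parameter $k=2N_0$ is simultaneously large enough to permit arbitrary re-routing of the $Y$-, $Z$- and clause-parts and small enough to forbid flipping any single $X$-variable. Once the gadget is pinned down, the cost accounting and the behaviour of the XOR-gadgets are exactly as in \cref{thm:discr-two-stage-tsp} and \cref{thm:recoverable-ind-set}.
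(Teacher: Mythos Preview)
Your proposal is correct and follows essentially the same strategy as the paper: start from the two-stage TSP reduction, blow up each $X$-variable gadget so that flipping a single $X$-choice forces more than $k$ edge changes, and set $k=2N_0$ so the recovery can freely re-route the $Y$-, $Z$- and clause-parts but never touch an $X$-choice. The only difference is the concrete blow-up: the paper attaches $N_0+1$ additional XOR-gadgets directly between the two parallel edges $x_i$ and $\overline{x}_i$ of each $X$-gadget (so the state of each such XOR-gadget is determined by the $x_i/\overline{x}_i$ choice), whereas you replace the gadget by a chain of $N_0+1$ copies glued together with consistency XOR-gadgets; both constructions yield the same key inequality, and your simplified cost table is a harmless adaptation for the recoverable objective.
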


\begin{proof}
Similar to the proof for recoverable independent set, we consider "blow ups" of the variable gadgets. 
Let $G' = G(\varphi)$ be the same graph as in the proof of \cref{thm:discr-two-stage-tsp}, i.e.\ the graph from \cref{fig:robust-tsp}. 
Let $N_0$ be the number of its vertices, and hence the length of a Hamilton cycle of this graph. We modify the graph $G'$ in the following way: 
For each of the $X$-variable gadgets belonging to $x_i$ (where $i = 1,\dots,n$), we add $N_0 + 1$ new XOR-gadgets, each of which which connects the edge $x_i$ to the edge $\overline x_i$. 
We now set the recoverability parameter $k$ to be $k = 2N_0$. Let $G''$ be the resulting graph after performing this modification for every $i=1,\dots,n$.  
It is clear that in every Hamilton cycle, for every variable $x_i$ the state of all the $N_0 + 1$ new XOR-gadgets belonging to $x_i$ is identical. 
It follows that with respect to the recoverability parameter $k$, two indicator vectors $\pmb x', \pmb y'$ of Hamilton cycles meet the condition $\pmb y' \in \X(\pmb x')$ if and only if they agree on all the $X$-variables. 
The rest of the proof is analogous to the proof of the two-stage variant,  \cref{thm:discr-two-stage-tsp}.
\end{proof}

\subsection{Two-stage and recoverable vertex cover}
\label{subsec:vertexcover}

Analogous to the previous subsections, where we considered two-stage and recoverable variants of the maximum independent set problem and the TSP, in this subsection we consider the vertex cover problem. We show that in combination with discrete budgeted uncertainty this problem is $\Sigma_3^p$-complete (both the adjustable two-stage, as well as the recoverable variant). 
Formally, if $G = (V,E)$ is a graph and $\X \subseteq \set{0,1}^V $ denotes the set of binary indicator vectors of vertex covers, then the two-stage problem is defined as

\begin{equation*}
\textsc{Rob} = \min_{\pmb x \in \set{0,1}^E} \max_{\pmb c \in \cU_\Gamma } \min_{\substack{\pmb y \in \set{0,1}^E\\ \pmb x+ \pmb y \in \X}} \pmb C \pmb x + \pmb c \pmb y 
\end{equation*} 

and the recoverable problem is defined as

\begin{equation*}
\textsc{Rob} = \min_{\pmb x \in \X} \max_{\pmb c \in \cU_\Gamma } \min_{\pmb y \in \X(\pmb x)} \pmb C \pmb x + \pmb c \pmb y. 
\end{equation*} 
Here, the cost functions and the uncertainty set are described by real numbers $C_v \geq 0$ and  $0 \leq \underline{c}_v \leq \overline{c}_v$ for every vertex $v$ and an integer $\Gamma \geq 0$. 
As the argument is very similar to the previous sections, we only provide a sketch of the proof.

\begin{theorem}
\label{thm:two-stage-and-recoverable-vertex-cover}
Robust two-stage minimum cost vertex cover with discrete budgeted uncertainty is $\Sigma_3^p$-complete. The same holds for robust recoverable vertex cover.
\end{theorem}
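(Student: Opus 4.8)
The overall plan follows the template of \cref{thm:discr-two-stage-ind-set} and \cref{thm:discr-two-stage-tsp}. Containment in $\Sigma_3^p$ is immediate from the $\exists\,\pmb x\;\forall\,\pmb c\;\exists\,\pmb y$ shape of both the two-stage and the recoverable problem, so only $\Sigma_3^p$-hardness requires an argument, and I obtain it by reducing from {\radj} through the classical reduction of 3SAT to vertex cover. Recall that reduction: each variable becomes a \emph{variable gadget} (an edge joining a vertex for the positive literal to a vertex for the negative literal), each clause $l_1\lor l_2\lor l_3$ becomes a \emph{clause gadget} (a triangle on three fresh vertices together with an edge from the $j$-th triangle vertex to the vertex of the literal $l_j$), and the resulting graph $G(\psi)$ has a vertex cover of size exactly $p+2q$, where $\psi$ has $p$ variables and $q$ clauses, if and only if $\psi$ is satisfiable. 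Starting from an instance $(\varphi,X,Y,Z,\Gamma)$ of {\radj} with $|X|=|Y|=|Z|=n$, $\ell$ clauses, all of length $3$ (\cref{thm:gamma-SAT}), I take $G(\varphi)$, let $V_1$ be the $2n$ vertices of the $X$-variable gadgets, let $V_2=\fromto{y_1}{y_n}$ be the positive-literal vertices of the $Y$-variable gadgets, and set the budget $\Gamma':=\Gamma$.

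For the two-stage problem I assign costs so that the solution is forced to be vertex-minimal and to encode an assignment: for $v\in V_1$ put $C_v=1$ and $\underline c_v=\overline c_v=2$; for $v\in V_2$ put $\underline c_v=1$ and $\overline c_v=2$; for every other vertex put $\underline c_v=\overline c_v=1$; and put $C_v=3n+2\ell+1$ for every $v\notin V_1$. The intended reading is that the first-stage solution $\pmb x$ can only afford to pick one vertex per $X$-gadget — this encodes an assignment $g_1$ of $X$ — that the adversary's only useful action is to raise the cost of up to $\Gamma$ of the vertices $y_i$, which we interpret as forcing the corresponding $Y$-variables to $0$, and that the second-stage solution $\pmb y$ must then extend $\pmb x$ to a cover using only vertices outside $V_1$ and outside the raised set. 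One then shows $\textsc{Rob}=3n+2\ell$ iff the {\radj} instance is a Yes-instance. For the forward direction, take $\pmb x$ encoding $g_1$ and, for each adversarial set, the {\radj}-guaranteed completion of $Y\cup Z$; these combine to a vertex cover, written $\pmb x\sqcup\pmb y$, of total cost $3n+2\ell$, every vertex of which currently costs $1$. For the converse, the starting point is the bound $\textsc{Rob}\ge 3n+2\ell$ (every cover of $G(\varphi)$ contains at least one vertex per variable gadget and two per clause triangle, pairwise disjoint, each contributing cost $\ge 1$); if equality holds, then every cover vertex contributes exactly $1$, which forces $\pmb x\subseteq V_1$, forces $\pmb y$ to be disjoint from $V_1$ (hence from $\pmb x$), and forces $\pmb y$ to avoid the raised vertices. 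Consequently $\pmb x$ picks exactly one vertex per $X$-gadget and $\pmb y$ exactly one vertex per $Y$- and $Z$-gadget and two per clause triangle, and the classical satisfiability correspondence converts this into an assignment of $\varphi$ consistent with $g_1$ and with the forced zeros; running over all adversarial sets yields the full $\exists\forall\exists$ statement.

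For the recoverable variant I reuse the \emph{blow-up} device from \cref{thm:recoverable-ind-set}: replace each $X$-variable gadget $\{x_i,\overline x_i\}$ by a complete bipartite graph whose two sides are $N_0+1$ copies of $x_i$ and $N_0+1$ copies of $\overline x_i$, where $N_0$ is the number of vertices of $G(\varphi)$, routing every edge formerly incident to $x_i$ (resp.\ $\overline x_i$) to all copies, and set the recoverability parameter $k:=N_0$. Since covering such a blown-up gadget requires an entire side, a short argument shows that two vertex covers within Hamming distance $k$ must select the same side in every blown-up gadget, whereas disagreement on some gadget forces distance $\ge 2(N_0+1)>k$; hence $\pmb y\in\X(\pmb x)$ forces $\pmb x$ and $\pmb y$ to encode the same assignment of $X$. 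Readjusting the costs so that the blown-up $V_1$-vertices are paid for only in the first stage ($C_v=1$, $\underline c_v=\overline c_v=0$) while every other vertex is paid for only in the second stage (as above, with the adversary again acting on $\fromto{y_1}{y_n}$), the two-stage argument carries over with acceptance threshold $n(N_0+1)+2n+2\ell$.

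The one point that requires care, and that did not occur for independent set or TSP, is that a vertex cover can always be enlarged, so a naive cost assignment would let the second stage pad the cover at no cost and thereby make \emph{every} instance a Yes-instance; giving every relevant vertex a strictly positive current cost and pinning the acceptance threshold to the exact size of a canonical cover is precisely what rules this out and forces the solution to genuinely encode an assignment. Verifying that this threshold bookkeeping is tight — and, in the recoverable case, that the blow-up/recovery-distance dichotomy holds exactly — is the main technical content of the proof.
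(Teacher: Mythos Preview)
Your proposal is correct and follows essentially the same approach as the paper: reduce from {\radj} via the classical 3SAT-to-vertex-cover gadget construction, use the same $V_1,V_2$ partition with cost weights that force $\pmb x$ to encode an $X$-assignment and confine the adversary to $V_2$, and handle the recoverable case by the same blow-up trick from \cref{thm:recoverable-ind-set}. The only differences are cosmetic parameter choices --- the paper uses uniform cost $2$ (rather than your large $C_v=3n+2\ell+1$) for forbidden first-stage picks, keeps the two-stage cost table unchanged in the recoverable version (rather than zeroing out $\underline c_v$ on $V_1$), and takes $k=2N_0$ --- and your Hamming-distance bound $2(N_0+1)$ for ``disagreement'' tacitly assumes both covers are minimal on the gadget, though the weaker bound $N_0+2$ that actually holds still exceeds your $k=N_0$.
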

\begin{proof}
The proof is analogous to the proof of two-stage independent set (\cref{thm:discr-two-stage-ind-set}) and recoverable independent set (\cref{thm:recoverable-ind-set}). 
Let $(\varphi,X,Y,Z,\Gamma)$ be an instance of {\radj}, where $|X| = |Y| = |Z| = n$ and where the formula $\varphi$ consists out of $\ell$ clauses with 3 literals each.
 Let $G = G(\varphi)$ be the same graph as in the proof of \cref{thm:discr-two-stage-ind-set}. 
It is now easily seen that $G$ has a vertex cover of cardinality $2\ell + 3n$ or less if and only if $\varphi$ is satisfiable. 
Furthermore, every vertex cover requires at least that number of vertices. 
We now let $V_1, V_2$ be the same set of vertices as in the proof of \cref{thm:discr-two-stage-ind-set}, and we define vertex costs as given in \cref{table:vertex-cover}. 
We claim that an optimal solution to the two-stage vertex cover problem has robust value $\textsc{Rob} \leq 2\ell + 3n$ if and only if $\varphi$ is a Yes-Instance. 
Indeed, observe that a vertex of cost 2 can never be picked in such a solution. The rest of the argument is analogous to the proof of \cref{thm:discr-two-stage-ind-set}. 
This shows that two-stage vertex cover is $\Sigma_3^p$-complete. 
Finally, the modification described in the proof of \cref{thm:recoverable-ind-set} (blowing up every $X$-variable gadget by a factor $N_0+1$, where $N_0$ is the original size of $G$) works in the same manner. 
This shows that recoverable vertex cover is $\Sigma_3^p$-complete.
\begin{table}
\centering
\begin{tabular}{l|rrr}
& $C_e$ & $\underline{c}_e$ & $\overline{c}_e$ \\
\hline
$e \in V_1$ & 1 & 2 & 2 \\
$e \in V_2$ & 2 & 1 & 2 \\
$e \in V \setminus (V_1 \cup V_2)$ & 2 & 1 & 1 
\end{tabular}
\caption{Costs assigned to the vertices in the robust vertex cover instance}
\label{table:vertex-cover}
\end{table}
\end{proof}

\subsection{Multi-stage versions}
\label{subsec:multistage}

In the last part of this section, we show that all our proofs for hardness of robust two-stage adjustable optimization can also be extended to the case where instead of two stages, we have an arbitrary number $K$ of stages. For $K \geq 1$, we define the \emph{robust $K$-stage adjustable optimization problem} with discrete budgeted uncertainty sets the following way: The input consists out of first-stage costs $\pmb c^{(0)}$ and $K-1$ independent discrete budgeted uncertainty sets $\cU^{(1)}_{\Gamma_1},\dots,\cU^{(K-1)}_{\Gamma_{K-1}}$. Let furthermore $\cB := \set{0,1}^n$ denote the set of all possible binary vectors and $\X \subseteq \cB$ denote the set of all feasible solutions (for example all vertex covers).
The task is to find the value

\begin{equation*}
\textsc{Rob} = \min_{\pmb x^{(0)} \in \cB} \max_{\pmb c^{(1)} \in \cU^{(1)}_{\Gamma_1} } \min_{\pmb x^{(1)} \in \cB} \dots \min_{\pmb x^{(K-2)} \in \cB} \max_{\pmb c^{(K-1)} \in \cU^{(K-1)}_{\Gamma_{K-1}}} \min_{\substack{\pmb x^{(K-1)} \in \cB\\ \sum_i \pmb x^{(i)} \in \X}} \sum_{i=0}^{K-1} \pmb c^{(i)}\pmb x^{(i)}.
\end{equation*} 

Note that for $K=2$, we have exactly the two-stage adjustable problem. 
\begin{theorem}
If $K \geq 2$ is a constant, then the $K$-stage versions of TSP, independent set, and vertex cover in combination with discrete budgeted uncertainty are $\Sigma_{2K-1}^p$-complete. If $K$ is part of the input, these problems are PSPACE-complete.
\end{theorem}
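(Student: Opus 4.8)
The plan is to keep the graph constructions of the three two-stage reductions (\cref{thm:discr-two-stage-ind-set}, \cref{thm:discr-two-stage-tsp}, \cref{thm:two-stage-and-recoverable-vertex-cover}) unchanged, but to replace {\radj} by $K$-stage {\radj} (\cref{thm:multi-stage-gamma-sat}) and to \emph{spread the vertex/edge costs over the $K$ stages}. Membership is the routine direction. For constant $K$, the decision version ``is $\textsc{Rob}\ge t$'' (respectively ``$\le t$'' for TSP and vertex cover) unfolds into a $(2K-1)$-fold quantifier alternation $\exists\pmb x^{(0)}\,\forall\pmb c^{(1)}\,\exists\pmb x^{(1)}\cdots$ over objects of polynomial size (although a discrete budgeted uncertainty set has exponentially many elements, each element has a polynomial-size description), followed by a polynomial-time check of $\sum_i\pmb x^{(i)}\in\X$ and of the objective bound; hence the problem lies in $\Sigma^p_{2K-1}$. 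When $K$ is part of the input the same game tree still has polynomial depth with polynomial-size moves, so the problem is decidable in alternating polynomial time and thus lies in PSPACE.

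For hardness we reduce from $K$-stage {\radj}. Fix an instance $(\varphi,X_1\cup\cdots\cup X_{2K-1},\Gamma)$ with $|X_1|=\cdots=|X_{2K-1}|=n$ and all clauses of length three, and build the classical SAT graph $G(\varphi)$ exactly as in the matching two-stage proof (the independent-set graph of \cref{fig:max-clique-reduction}, the Hamilton graph of \cref{fig:robust-tsp}, or the same graph read as a vertex-cover instance; for TSP and vertex cover one uses the cost tables \cref{table:tsp} and \cref{table:vertex-cover} instead, and since independent set is a maximization problem the roles of $\min$ and $\max$ in the $K$-stage definition are exchanged there). For the independent-set case we assign costs as follows. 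A vertex of a variable gadget belonging to an existential block $X_{2i+1}$ (with $X_1$ treated at stage $0$) receives cost $1$ at stage $i$ and cost $0$ at every other stage. A positive-literal vertex of a variable gadget belonging to an adversarial block $X_{2i}$ plays the role of the set $V_2$ of the two-stage proof: it receives nominal stage-$i$ cost $1$ with deviation down to $0$, while its negative-literal partner receives stage-$i$ cost $1$ with no deviation; all other stage-$i$ costs of those gadgets are $0$. A clause-gadget vertex receives cost $1$ at the fixed stage $K-1$ and cost $0$ elsewhere. We set $\Gamma_i:=\Gamma$ for all $i$, and take as target the total number of gadgets $(2K-1)n+\ell$ (and, as in the two-stage proofs, $0$ for TSP and the analogous gadget count for vertex cover). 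This construction is polynomial in the size of the {\radj} instance and in $K$, which is exactly what the PSPACE-hardness claim needs.

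Correctness is proved by the same ``Claim 1 / Claim 2'' argument as in \cref{thm:discr-two-stage-ind-set}, now threaded stage by stage. In one direction, a winning first-player strategy for $K$-stage {\radj} is translated online into a play of the optimization instance: at stage $0$ pick the $X_1$-literal vertices dictated by the strategy, and at each player stage $i\ge 1$ react to the revealed cost vectors $\pmb c^{(1)},\dots,\pmb c^{(i)}$ (whose deviations encode precisely which variables of the blocks $X_2,\dots,X_{2i}$ have been forced to $0$) by picking, from each $X_{2i}$- and $X_{2i+1}$-gadget, the literal vertex prescribed by the strategy (for an attacked $X_{2i}$-gadget this is the undamaged partner); since $\varphi$ then ends up satisfied, one vertex of every clause gadget can also be picked and the objective reaches the target against every adversary. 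Conversely, the stagewise cost structure forces any play reaching the target to pick exactly one vertex from every gadget and to do so at the unique stage at which that gadget carries nonzero cost (a gadget vertex picked at any other stage contributes nothing, and picking a vertex twice violates $\sum_i\pmb x^{(i)}\in\X$); reading off the chosen literals recovers a first-player strategy for $K$-stage {\radj}, because a stage-$i$ cost attack in the optimization instance is exactly an admissible ``at most $\Gamma$ variables of $X_{2i}$ set to $0$'' move. The main obstacle is precisely this bookkeeping: one must verify that commitment really is forced at the right stage, that the monotone accumulation $\sum_i\pmb x^{(i)}$ creates no spurious feasible solution, and that the adversary's cost moves exhaust exactly the admissible {\radj}-attacks --- all of which are the points already checked in the two-stage proofs, here merely iterated. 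We note that no XOR ``blow-up'' is needed because we treat the adjustable (not the recoverable) $K$-stage problem, and that for vertex cover one additionally uses, as in \cref{thm:two-stage-and-recoverable-vertex-cover}, that a cost-$2$ vertex is never taken in a target-achieving solution.
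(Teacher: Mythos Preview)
Your proposal is correct and takes essentially the same approach as the paper: reduce from $K$-stage {\radj} via the identical graph constructions of the two-stage proofs, spreading the cost structure over the $K$ stages so that each variable block is forced to be committed at its designated stage. The paper's own proof is a two-sentence sketch that merely says the earlier proofs ``can easily be adapted'' using \cref{thm:multi-stage-gamma-sat}; your write-up supplies the concrete cost assignment and the stage-by-stage commitment argument that the paper leaves implicit.
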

\begin{proof}
The proof can easily be adapted from the previous proofs of \cref{thm:discr-two-stage-ind-set,thm:recoverable-ind-set,thm:discr-two-stage-tsp,thm:recoverable-tsp,thm:two-stage-and-recoverable-vertex-cover}.
We make use of the fact that $K$-stage {\radj} is $\Sigma_{2K-1}^p$-complete for constant $K$ and PSPACE-complete for $K$ part of the input (\cref{thm:multi-stage-gamma-sat}). 
\end{proof}

\section{Conclusions}
\label{sec:conclusions}

Multi-stage (adjustable) robust optimization is a natural extension of static, one-stage approaches to model a more dynamic decision making environment. By giving an opportunity to react to adversarial choices, it is possible to reach better objective values and thus to reduce the conservatism of robust optimization. The benefits, however, come with increased computational difficulties.

While several heuristic and exact solution methods have been developed, the complexity of many such problems remained open. Of particular importance is whether a problem still remains in NP, and thus allows for a compact mixed-integer programming formulation, or if it has a higher complexity in the polynomial hierarchy. 

In this paper we first introduced a variant of a multi-stage satisfiability problem, where the adversary has a budget on the number of ''attacks'' (forcing variables to zero). This problem is designed to capture the key difficulties of protecting against budgeted uncertainty sets, where a bound on the deviation from the nominal scenario is used. With the help of this SAT problem, we are able to show that adjustable problems with continuous budgeted uncertainty in the right-hand side are $\Sigma^p_3$-complete, while the problem remains in NP if the uncertainty is in the objective.

We then considered a range of classic combinatorial optimization problems (independent set, traveling salesman, vertex cover) under discrete uncertainty sets and showed that these problems become $\Sigma^p_3$-complete as well. By natural extension, $K$-stage problem variants become $\Sigma^p_{2K-1}$-complete.

For future research, a lot of open questions remain. In this work, we showed that it is often times $\Sigma^p_3$-hard to compute an exact solution to a robust multi-stage problem. More generally, one could also examine when it is $\Sigma^p_3$-hard to even give a constant-factor approximation.
Secondly, one could strengthen \cref{thm:mixed-binary-recourse} (for continuous budgeted uncertainty) by showing hardness for more restricted optimization problems which are special cases of our model.
Finally, it would be very insightful to find some sort of ''meta-theorem'', which generalizes the results from \cref{sec:discbudgeted} (for discrete budgeted uncertainty). 
Such a meta-theorem may introduce some easy-to-fulfill property such that for all nominal optimization problems with this property, the corresponding robust two-stage adjustable (or robust recoverable, respectively)  counterpart is $\Sigma^p_3$-hard. 
Such a meta-theorem would show that in fact a lot more problems than just TSP, independent set and vertex cover possess the properties showcased in \cref{sec:discbudgeted}. As our methods are quite general, we deem it likely for such a meta-theorem to exist.

\end{document}